\lstdefinelanguage{Macaulay2}{
comment=[l]{--},
alsoletter={'},
alsoother={_},
}
\itshape\color{gray},
\theoremstyle{plain}
\newtheorem{Theorem}{Theorem}[section]
\newtheorem{prop}[Theorem]{Proposition}
\newtheorem{conj}[Theorem]{Conjecture}
\newtheorem{thm}[Theorem]{Theorem}
\newtheorem{cor}[Theorem]{Corollary}
\newtheorem{lem}[Theorem]{Lemma}
\newtheorem*{Ackn}{Acknowledgments}
\theoremstyle{definition}
\newtheorem{dfn}[Theorem]{Definition}
\theoremstyle{remark}
\newtheorem{rmk}[Theorem]{Remark}
\DeclareMathOperator{\Q}{\mathbb Q}
\DeclareMathOperator{\Co}{\mathbb C}
\DeclareMathOperator{\ns}{NS}
\DeclareMathOperator{\h}{H}
\DeclareMathOperator{\pr}{\mathbb{P}}
\DeclareMathOperator{\ob}{\mathcal{O}}
\DeclareMathOperator{\Bl}{Bl}
\newcommand*{\rom}[1]{\expandafter\@slowromancap\romannumeral #1@}
\begin{document}
\author{Abugaliev Renat}
\title{Characteristic foliation on hypersurfaces with positive Beauville-Bogomolov-Fujiki square}
\maketitle
\begin{abstract}
Let $Y$ be a smooth hypersurface in a projective irreducible holomorphic symplectic manifold X of dimension 2n. The characteristic foliation $F$ is the kernel of the symplectic form restricted to Y. In this article we prove that a generic leaf of the characteristic foliation is dense in Y if Y has positive Beauville-Bogomolov-Fujiki square.
\end{abstract}
\section{Introduction}
First, we recall the definition of our main object of study, that is an irreducible holomorphic symplectic manifold.
\begin{dfn}
Let $X$ be a smooth projective variety over $\Co$. It is an irreducible holomorphic symplectic (IHS) if it satisfies the following properties:
\begin{itemize}
    \item $\h^0(X,\Omega^2_X)=\Co \sigma$, where $\sigma$ is a holomorphic symplectic form (at any point of $X$);
    \item $\h^1(X,\ob_X)=0$;
    \item $\pi_1(X)=0$.
\end{itemize}
\end{dfn}

 Note that a holomorphic symplectic form $\sigma$ on a smooth variety $X$ induces an isomorphism between the vector bundles $T_X$ and $\Omega_X$. Indeed, one can map a vector field $v$ to the differential form $\sigma(v,*)$.
\begin{dfn}Let $Y$ be a hypersurface in $X$ and $Y^{sm}$ smooth locus of $Y$. Consider the restriction of $T_X$ to $Y$. The restriction of a symplectic form to any codimension one subspace has rank $2n-2$ i.e. has one-dimensional kernel. The orthogonal complement of the bundle $T_{Y^{sm}}$ in  $T_{X|Y^{sm}}$ is a line subbundle $F$ of $T_{Y^{sm}}\subset T_{X} | _{Y^{sm}}$. We call the rank one subbundle $F\subset T_{Y^{sm}}$ the {\bf characteristic foliation}.
\end{dfn}
Assume $Y$ is smooth. Since $Y=Y^{sm}$, $F$ is a subbundle of $T_Y$. Furthermore, $F$ is isomorphic to the conormal bundle $\mathcal{N}_{Y/X}$ (which is isomorphic to $\ob_Y(-Y)$ by the adjunction formula). Indeed, consider the following short exact sequence: 
$$ 0\to T_Y \to T_X|Y \to \ob_Y(Y) \to 0.$$
Applying the isomorphism $T_X\cong \Omega_X$, we obtain that $F\cong\ob_Y(-Y)$.\\

  Our first question is whether it is algebraically integrable. Jun-Muk Hwang and Eckart Viehweg showed in \cite{HV} that if $Y$ is of general type, then $F$ is not algebraically integrable.  In paper \cite{AC} Ekaterina Amerik and Frédéric Campana  completed this result to the following.
\begin{thm}\cite[Theorem~1.3]{AC} \label{ac}
 Let $Y$ be a smooth hypersurface in an irreducible holomorphic symplectic manifold $X$ of dimension at least $4$. Then  the characteristic foliation on $Y$ is algebraically integrable if and only if $Y$ is uniruled, i.e. covered by rational curves.
\end{thm}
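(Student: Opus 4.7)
\bigskip

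\noindent\textbf{Proposal for a proof of Theorem~\ref{ac}.} I would prove the two implications separately, since they have very different flavours.

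\smallskip

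\noindent\emph{Uniruled implies algebraic integrability.} The plan is to take a covering family $\{C_t\}_{t\in T}$ of rational curves on $Y$ (e.g.\ a minimal family, i.e.\ one minimising the anticanonical degree on $Y$) and show that each member is tangent to $F$; the family then foliates $Y$ by compact integral curves of $F$. To prove tangency, I would exploit the exact sequence induced by the symplectic form. The isomorphism $\sigma:T_X\xrightarrow{\sim}\Omega_X$ identifies $F\subset T_Y$ with the conormal bundle $\mathcal{N}_{Y/X}^*$, and a short diagram chase shows that $\sigma$ gives a SES
\[
0\to T_Y/F\to \Omega_Y\to \mathcal{N}_{Y/X}\to 0,
\]
with $\mathcal{N}_{Y/X}\cong K_Y$ since $K_X=0$. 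For the normalisation $\nu:\mathbb{P}^1\to C\subset Y$ of a general member of the family, I would look at the composition
$T_{\mathbb{P}^1}\to \nu^*T_Y\to \nu^*(T_Y/F)$.
If this map is non-zero it is injective on the rank-one piece, giving $2=\deg T_{\mathbb{P}^1}\leq \deg L$ for some sub-line-bundle $L\subset \nu^*(T_Y/F)$. A degree argument, using the minimality of the family (Mori-style bend-and-break bounds on $-K_Y\cdot C$) and the above SES, should contradict this inequality, forcing the map to vanish and hence $C$ to be tangent to $F$.

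\smallskip

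\noindent\emph{Algebraic integrability implies uniruled.} Here the plan is to take the IVHS-style rational map $\pi:Y\dashrightarrow B$ whose general fibre is a leaf $L$ of $F$, and show that the general $L$ has geometric genus zero. For a general smooth leaf $L$, tangency to $F$ gives $T_L\cong F|_L\cong\mathcal{O}_Y(-Y)|_L$. The strategy is then to bound $\deg T_L$ from below using the global geometry of the family $\{L_b\}$ on the ambient IHS $X$: deforming $L$ inside $X$ (not merely inside $Y$) produces sections of its normal bundle, and the symplectic form converts such deformations into holomorphic data on $L$ that constrains its genus. Combined with the already mentioned result of Hwang--Viehweg (algebraic integrability excludes $Y$ of general type), one should be able to push this to the conclusion that the leaves are rational, hence $Y$ is uniruled.

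\smallskip

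\noindent\emph{Main obstacle.} The difficult part is the second implication. Writing down $T_L\cong\mathcal{O}_Y(-Y)|_L$ is not enough by itself: in interesting cases $Y$ will be big and nef, so $\mathcal{O}_Y(-Y)|_L$ need not be of positive degree, and one could a priori imagine leaves of higher genus. The heart of the argument must be a genuinely global input from the IHS structure of $X$ — typically, a deformation-theoretic / Hilbert-scheme argument using the symplectic form on $X$ to control how leaves move — to rule out families of non-rational leaves sweeping a hypersurface. Designing this input, and ensuring that it works uniformly for a general leaf and not merely for special ones, is where I expect the real work to lie.
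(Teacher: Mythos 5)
This theorem is not proved in the paper at all: it is quoted from Amerik--Campana \cite{AC}, so there is no internal proof to compare your attempt against; I can only judge the proposal on its own terms, and it establishes at most half of the statement. For the direction ``uniruled $\Rightarrow$ algebraically integrable'' your idea can be made to work, but the mechanism you invoke is not the right one: minimality of the family and bend-and-break bounds on $-K_Y\cdot C$ play no role. What kills the composition $T_{\mathbb{P}^1}\to\nu^*T_Y\to\nu^*\Omega_Y$ is simply that a general member of a covering family is \emph{free}, so $\nu^*T_Y$ is nef and $\nu^*\Omega_Y\cong\bigoplus\mathcal{O}(-a_i)$ with all $a_i\geq 0$, whence any map from $\mathcal{O}(2)$ into it vanishes. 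Note also that your intermediate step is misleading: a sub-line-bundle of degree $2$ in $\nu^*(T_Y/F)$ is not by itself contradictory, since $T_Y/F$ is a \emph{quotient} of $T_Y$ and its pullback to a free curve is nef; the contradiction only appears after composing with the injection $T_Y/F\hookrightarrow\Omega_Y$. A cleaner route (the one this paper itself sketches in the introduction for the case $q(Y,Y)<0$) is via the MRC fibration: rationally connected fibres carry no holomorphic forms, so $\sigma|_Y$ is pulled back from the base, the fibres are tangent to $\ker(\sigma|_Y)=F$, hence are one-dimensional and coincide with the leaves.

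The genuine gap is the converse, which is the actual content of \cite{AC}. From $T_L\cong F|_L\cong\mathcal{O}_Y(-Y)|_L$ one gets $\deg T_L=-Y\cdot L$, and the only immediate conclusion is that $L$ is rational when $Y\cdot L<0$ (cf.\ Theorem \ref{BogMc}). The cases $Y\cdot L=0$ (elliptic leaves) and $Y\cdot L>0$ (leaves of general type) are not excluded by any degree count; the general-type case is Hwang--Viehweg, but the elliptic case is precisely the new and difficult part of Amerik--Campana, requiring a global analysis of the resulting fibration in elliptic curves (regularity of $F$ on the smooth hypersurface $Y$, isotriviality and monodromy of the family, and the symplectic geometry of $X$). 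Your paragraph on this direction --- ``deforming $L$ inside $X$ produces sections of its normal bundle, and the symplectic form converts such deformations into holomorphic data that constrains its genus'' --- is a research programme, not an argument, and you concede as much. As written, the proposal does not prove the theorem.
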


The next step is to ask what could be the dimension of the Zariski closure of a generic leaf of $F$. In dimension $4$ the situation is understood thanks to Theorem \ref{ag}.
\begin{thm}[\cite{AG}]\label{ag}
Let $X$ be an irreducible holomorphic symplectic fourfold and let $Y$ be an irreducible smooth hypersurface in $X$. Suppose that the characteristic foliation $F$ on $Y$ is not algebraically integrable, but there exists a meromorphic fibration on $p:Y \dasharrow C$  by surfaces invariant under $F$ (see Definition \ref{inv}). Then  there exists a rational Lagrangian fibration $X \dasharrow B$ extending $p$. In particular, the Zariski closure of a generic leaf is an abelian surface.
\end{thm}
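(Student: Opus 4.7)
The plan is to identify the smooth fibers of $p$ as Lagrangian surfaces of $X$, to show via deformation theory that they extend to a two-dimensional family of Lagrangians covering $X$, and to identify them as abelian surfaces.

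First, I would show that a smooth general fiber $S = p^{-1}(c)$ is Lagrangian. Invariance of $S$ under $F$ means $F_y \subseteq T_y S$ for each $y \in S$. Since $F_y = \ker(\sigma|_{T_y Y})$, the line $F_y$ lies in the radical of $\sigma|_{T_y S}$; but a nonzero alternating $2$-form on the two-dimensional space $T_y S$ has trivial kernel, so $\sigma|_{T_y S} \equiv 0$. Hence $S$ is isotropic of maximal dimension, i.e.\ Lagrangian in $X$, and the symplectic form induces $N_{S/X} \cong \Omega^1_S$.

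Next, the one-parameter family $\{S_c\}_{c \in C}$ of subvarieties of $Y$ contributes a nonzero first-order deformation section of $N_{S/Y} \hookrightarrow N_{S/X} \cong \Omega^1_S$, so $h^0(S,\Omega^1_S) \geq 1$ and thus $q(S) \geq 1$. Because $F$ is not algebraically integrable but its generic leaf stays in $S$, the closure of such a leaf must equal $S$; hence $F|_S$ is a rank-one foliation on $S$ with generically non-algebraic leaves. Theorem \ref{ac} guarantees that $Y$, and hence $S$, is not uniruled. Combining these constraints with the Enriques--Kodaira classification, I would argue that $S$ must be an abelian surface with $h^0(S,\Omega^1_S)=2$: K3 and Enriques surfaces are excluded by $q(S)\geq 1$, and the bielliptic case should be ruled out by a finer analysis of $F|_S$ together with the positivity of $\ob_Y(-Y)|_S$, which governs $F|_S$ as a line subbundle of $T_S$.

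Having identified $S$ as abelian, Voisin's unobstructedness theorem for Lagrangian submanifolds of holomorphic symplectic manifolds yields a smooth, two-dimensional deformation space of $S$ in $X$, whose total family sweeps out a neighborhood of $S$ and hence all of $X$ by irreducibility. To globalize this local family into a meromorphic Lagrangian fibration $\pi : X \dashrightarrow B$ of base dimension $2$, in which $C$ embeds as the curve parametrizing the $S_c$ so that $\pi|_Y$ extends $p$, one combines the disjointness of distinct general fibers of $p$ with standard rigidity and separation properties for Lagrangian cycle classes.

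The main obstacle I anticipate is the classification step pinning $S$ down as an abelian surface from the foliation data, especially excluding the bielliptic case; the globalization of local Lagrangian deformations into an honest meromorphic fibration should follow by more routine arguments.
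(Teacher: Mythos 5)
This statement is quoted from \cite{AG}; the paper gives no proof of it, so there is nothing internal to compare against and I can only assess your argument on its own terms. Your opening step is correct and is indeed how one must start: $F$-invariance of a two-dimensional fiber forces $\sigma|_{T_yS}$ to have a kernel, hence to vanish, so the general fiber $S$ is Lagrangian and $N_{S/X}\cong\Omega^1_S$. The observation that the fibration gives a nonzero section of $N_{S/Y}\subset\Omega^1_S$ is also right, but you are discarding information here: for a general fiber of a fibration over a curve one has $N_{S/Y}\cong\ob_S$ as a line \emph{subbundle} of $\Omega^1_S$, so $S$ carries a nowhere-vanishing holomorphic $1$-form, whence $c_2(S)=e(S)=0$. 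That is what makes the classification step tractable; with only ``$h^0(S,\Omega^1_S)\ge 1$'' you cannot exclude non-minimal surfaces or properly elliptic surfaces of irregularity $1$, and your exclusion of the bielliptic case is left entirely open. Bielliptic surfaces do carry rank-one foliations with Zariski-dense leaves, so the mere existence of such a foliation does not rule them out; one has to use the specific line bundle $F|_S\cong\ob_S(-Y)$ and its interaction with the trivial subbundle of $\Omega^1_S$.

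The more serious gap is the globalization. Deforming $S$ as a Lagrangian submanifold (Voisin/McLean) does give a smooth two-dimensional deformation space and a covering family of Lagrangian surfaces, but passing from a covering family of Lagrangian tori to an actual rational Lagrangian fibration $X\dasharrow B$ extending $p$ is not a ``routine rigidity and separation argument'': a priori distinct members of the family can meet, and the statement you need is precisely the theorem of Hwang and Weiss that an embedded Lagrangian torus in a projective irreducible holomorphic symplectic manifold is a fiber of an almost holomorphic Lagrangian fibration. That (or an equivalent result) is the essential external input, and without it your final paragraph is an outline rather than a proof. In short, the skeleton of your argument is the correct one, but both the identification of $S$ as an abelian surface and the construction of the fibration on $X$ are missing their key ingredients.
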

This leads to the following conjecture. 
\begin{conj}[Campana]\label{conja}
Let $Y$ be a smooth hypersurface in an irreducible holomorphic symplectic manifold $X$ and let $q$ be Beauville-Bogomolov form on $\h^2(X,\Q)$. Then:
\begin{enumerate}
\item If $q(Y,Y)>0$, a generic leaf of $F$ is Zariski dense in $Y$;
 \item If $q(Y,Y)=0$, the Zariski closure of a generic leaf of $F$ is an abelian variety of dimension $n$;
 \item If $q(Y,Y)<0$, $F$ is algebraically integrable and $Y$ is uniruled.
 \end{enumerate}
 \end{conj}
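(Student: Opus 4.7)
The plan is a proof by contradiction, ruling out algebraic integrability and intermediate-dimensional leaf closure separately.

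First, algebraic integrability is incompatible with $q(Y,Y)>0$. By Theorem \ref{ac}, algebraic integrability of $F$ forces $Y$ to be uniruled; moreover, since $F \cong \ob_Y(-Y)$ and $T_C = F|_C$ for a rational leaf $C$, the leaves satisfy $C \cdot Y = -2$, so $Y$ is a uniruled divisor whose covering family of curves intersects $Y$ negatively. Via the standard dictionary for divisors on IHS manifolds (Boucksom--Druel, Markman), such a hypersurface is prime exceptional and hence has $q(Y,Y)<0$, contradicting the hypothesis. Thus we may assume $F$ is not algebraically integrable.

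Suppose now, for contradiction, that the Zariski closure $Z$ of a generic leaf has dimension $k$ with $1 < k \le 2n-2$. The closures assemble into a meromorphic $F$-invariant fibration $p : Y \dashrightarrow B$ (Definition \ref{inv}) with general fibre $Z$. The central step is to extend $p$ to a meromorphic fibration $\pi : X \dashrightarrow B'$ of the ambient manifold such that $p$ factors through $\pi|_Y$. By Matsushita's theorem any non-trivial meromorphic fibration of an IHS has $n$-dimensional base and is Lagrangian, so $\pi$ is Lagrangian. Once the extension is in place, I would argue that general $Z$ coincides with a full Lagrangian fibre $A$ of $\pi$: the inclusion $F \subset T_Z$ together with the tangency of $F$ to the fibres of $\pi$ forces $Z \subset A$, and $F$-invariance plus a dimension count yields equality. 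In particular $Y$ contains the general Lagrangian fibre, hence $Y = \pi^{*}D$ for some divisor $D \subset B'$. The Fujiki relation $\int_X Y^{2n} = c_X \cdot q(Y,Y)^n$ together with the vanishing of top self-intersections of classes pulled back from an $n$-dimensional base then forces $q(Y,Y)=0$, the desired contradiction.

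The main obstacle is the extension step in arbitrary dimension. The case $n=2$, $k=n=2$ treated in \cite{AG} uses the specific deformation theory of abelian surfaces inside a fourfold to reassemble the fibres of $p$ into a global Lagrangian fibration of $X$. In higher dimension the fibres $Z$ need not carry a group structure, and the relation between $k$ and the dimension of the extended base is non-trivial. I would try to bypass this by taking the symplectic orthogonal $T_Z^{\perp_\sigma}$ inside $T_X|_Z$, which contains $F$ and whose dimension is controlled by the corank of $\sigma|_Z$, and then showing that as $Z$ varies these spaces assemble into an integrable distribution on $X$ whose leaves are Lagrangian. A secondary difficulty is the uniform treatment of all admissible $k$, in particular the extremal value $k = 2n-2$ where $B$ is a curve; this may require a separate direct argument involving the positivity of $\ob_Y(Y)$ transverse to the foliation.
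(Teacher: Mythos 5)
Your opening step---ruling out algebraic integrability when $q(Y,Y)>0$ by combining Theorem \ref{ac} with the fact that a smooth uniruled hypersurface is prime exceptional and hence has negative square---is sound and is essentially the paper's own route (Theorem \ref{per} via Boucksom). The problem is the second half. The load-bearing step of your argument, extending the $F$-invariant fibration $p:Y\dashrightarrow B$ to a rational Lagrangian fibration of the ambient $X$, is precisely the hard content of \cite{AG}, which is only available in dimension four, and you yourself flag that you cannot carry it out in general. Neither the symplectic-orthogonal distribution $T_Z^{\perp_\sigma}$ idea nor the promised ``separate argument for $k=2n-2$'' is developed to the point where it could be checked, and producing a higher-dimensional analogue of Theorem \ref{ag} would be a substantial theorem in its own right. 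As written the proposal therefore has a genuine gap at its central step.

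The paper sidesteps the extension problem entirely, and the trick it uses is the idea you are missing. Given the $F$-invariant fibration $Y\dashrightarrow B$ with fibres of whatever dimension $k$, it composes with a pencil of hypersurfaces on $B$ to get a fibration $Y\dashrightarrow \pr^1$ whose fibres $Z$ are $F$-invariant of codimension $2$ in $X$, hence coisotropic by Lemma \ref{cois}; coisotropy is the purely cohomological condition $[Z]\cup[\sigma^{n-1}]=0$. The Lefschetz hyperplane theorem (classical for $Y$ ample, and proved in Section \ref{seclht} for $Y$ nef and big via semismallness of the associated morphism and the Esnault--Viehweg vanishing) writes $[Z]=[D]\cup[Y]$ for some divisor class $D$ with $q(D,Y)=0$, and the Fujiki relation together with the signature of the BBF form on $\h^{1,1}(X)$ produces a sign contradiction: $[Y]^{2n-2}\cup[Z]>0$ forces $q(D,Y)>0$ in the ample case, while in the nef and big case the covering family gives $D^2\cdot Y^{2n-2}\ge 0$ against the forced inequality $D^2\cdot Y^{2n-2}<0$. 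The smooth non-nef case is then excluded because such a hypersurface is uniruled and so has $q<0$. Unless you can actually prove the extension theorem in all dimensions, you should replace your globalization strategy by this reduction to codimension-two coisotropic subvarieties.
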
 
Let us explain why this conjecture is plausible and formulate the results we achieved.\\

 \textbf{Case of $q(Y,Y)<0$.} In this case the conjecture is easy to prove. By \cite[Theorem~ 4.2 and Proposition~4.7]{Bo} $Y$ is uniruled, if $q(Y,Y)<0$. There is a dominant rational map $f:Y\dasharrow W$, such that the fibers of $f$ are rationally connected (see \cite{cam} and \cite[Chapter ~IV.5]{KolRC}). Rationally connected varieties do not have non-zero holomorphic differential forms. Thus, the form $\sigma|_Y$ is the pull-back of some form $\omega \in \h^0(W, \Omega^2_W)$ and for any point $x$ in $Y$ the relative tangent space $T_{Y/W,x}$ is  the kernel of the form $\sigma|_Y$. The kernel of $\sigma|_Y$ has dimension one at every point. So, the rational map $f:Y\dasharrow W$ is a fibration in rational curves and these rational curves are the leaves of the foliation $F$.\\

\textbf{Case of $q(Y,Y)=0$}. Conjecturally, $X$ admits a rational lagrangian fibration, and hypersurface $Y$ is  the inverse image of a hypersurface of its base (this conjecture was proved for manifolds of K3 type in \cite{BM} and for manifolds of Kummer type \cite{yos}, for O'Grady 6 type in \cite{MR} and for O'Grady 10 type in \cite{MO}). Moreover a rational  Lagrangian fibration can be replaced with a regular Lagrangian fibration, if we assume that the divisor $Y$ is numerically effective. In the previous work \cite{Ab1} we consider an irreducible holomorphic symplectic manifold $X$ equipped with a regular Lagrangian fibration $\pi:X\to B$. The main result of \cite{Ab1} is the following.
\begin{thm}\label{t2}Let $X$ be a projective irreducible holomorphic symplectic manifold and \\$\pi:X \to B$ a Lagrangian fibration. Consider a hypersurface $D$ in $B$ such that its preimage $Y$ is a smooth irreducible hypersurface in $X$. Then the closure of a generic leaf of the characteristic foliation on $Y$ is a fiber of $\pi$ (hence an abelian variety of dimension $n$). 
\end{thm}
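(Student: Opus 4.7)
The plan is to locate the characteristic foliation $F$ inside the Lagrangian fibration $\pi$, compute it on a single fiber, and then obtain density. First, I would show that $F$ is tangent to the fibers of $\pi$. Fix $y\in Y$ and set $A=\pi^{-1}(\pi(y))$; since $\pi(y)\in D$, the fiber $A$ is contained in $Y$, so $T_yA\subset T_yY$. The Lagrangian property of $\pi$ gives $(T_yA)^{\perp}=T_yA$ with respect to $\sigma_y$, and taking $\sigma$-orthogonals reverses the inclusion $T_yA\subset T_yY$ into $(T_yY)^{\perp}\subset T_yA$. Since $Y$ has odd dimension $2n-1$, the kernel $F_y$ of $\sigma|_{T_yY}$ coincides with the one-dimensional line $(T_yY)^{\perp}$, and so $F_y\subset T_yA$.

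Next I would compute $F|_A$ as a line bundle on a smooth fiber. Since $Y=\pi^{-1}(D)$, one has $\ob_X(-Y)\cong\pi^{*}\ob_B(-D)$, whose restriction to any fiber of $\pi$ is trivial; combined with the identification $F\cong\ob_Y(-Y)$ recalled in the introduction, this gives $F|_A\cong\ob_A$. The inclusion $\ob_A\cong F|_A\hookrightarrow T_A$ is thus a nowhere-vanishing and hence translation-invariant vector field $v$ on the abelian variety $A$. Explicitly, if $t$ is a local equation of $D$ near $b=\pi(A)$, then $v=\sigma^{-1}(\pi^{*}dt)|_A$; via the Lagrangian identification $N^{*}_{A/X}=\pi^{*}\Omega^1_B|_A\cong T_A$, the vector $v$ corresponds to the conormal direction $dt|_b\in\Omega^1_{B,b}$. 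Consequently, the leaf through $y\in A$ is a translate of the $1$-parameter analytic subgroup $\exp(\Co v)\subset A$, and its Zariski closure is a translate of the smallest abelian subvariety $A'\subset A$ whose tangent space at $0$ contains $v$.

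The hard part is to establish $A'=A$ for a generic fiber. I would argue by contradiction: if $A'_b\subsetneq A_b$ held for all $b$ in a neighborhood of a generic point of $D$, the subvarieties $A'_b$ would glue into a meromorphic $F$-invariant intermediate fibration $Y\dashrightarrow W$ through which $\pi|_Y$ factors nontrivially over a base of dimension strictly greater than $\dim D=n-1$. The direction $v_b$, determined by the conormal $N^{*}_{D/B,b}$, varies holomorphically with $b$, while the proper $\Q$-rational subtori of $A_b$ form only a countable discrete set of candidates at each period point; heuristically this forbids $v_b$ from lying in a common proper rational subspace of $T_{A_b,0}$ over an open set in $D$. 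Making this into an honest contradiction is where the global geometry of $X$ must enter: one can either invoke a sufficiently large monodromy representation of $\pi$ on $\h^1(A_b,\Q)$ for a very general fiber, or observe that such an intermediate fibration would have to extend to a Lagrangian fibration of $X$ with base of dimension strictly less than $n$, which is impossible since $\dim B=n=\tfrac12\dim X$ is already maximal. This global step is the essential content; the first two steps are purely local.
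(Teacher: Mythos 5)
This statement is not proved in the present paper at all: it is quoted as the main result of the earlier work \cite{Ab1}, so there is no in-paper argument to compare yours against. Judged on its own, your proposal sets up the problem correctly but stops short of a proof at exactly the point where the real content lies.

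Your first two steps are sound. The inclusion $F_y\subset T_yA$ via $(T_yY)^{\perp}\subset(T_yA)^{\perp}=T_yA$ is correct (for $y$ in a smooth fiber), and the identification $F|_A\cong\ob_Y(-Y)|_A\cong\pi^*\ob_B(-D)|_A\cong\ob_A$ correctly produces a translation-invariant vector field $v$ on $A$, so that the leaf closure is a translate of the smallest abelian subvariety $A'\subset A$ with $v\in\mathrm{Lie}(A')$. The gap is the claim $A'=A$ for generic $b\in D$, which you acknowledge but do not close, and neither of your two suggested routes works as stated. The monodromy route is not available off the shelf: one would need the monodromy of $\pi$ on $\h^1(A_b,\Q)$ restricted over $D$ to preserve no proper subspace containing the conormal direction, and nothing in your setup supplies this. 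The second route is based on a miscount: a fibration of $Y$ by proper subtori $A'_b$ of dimension $m<n$ has base of dimension $2n-1-m>n-1$, so it would not extend to a Lagrangian fibration of $X$ with base of dimension \emph{less} than $n$; the fibers $A'_b$ are isotropic but not Lagrangian, and Matsushita's bound on the base dimension of Lagrangian fibrations says nothing about such isotropic fibrations. The difficulty is genuine, not technical: fibers of Lagrangian fibrations are frequently non-simple abelian varieties (e.g.\ products of elliptic curves for $S^{[n]}$ of an elliptic K3), and for special choices of $D$ the conormal direction can lie in the Lie algebra of a proper subtorus along all of $D$ --- the theorem excludes this only through the hypothesis that $Y=\pi^{-1}(D)$ is smooth and irreducible, a hypothesis your argument never uses after step one. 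Any correct proof must bring that hypothesis (or equivalent global input on $X$ and the discriminant of $\pi$) to bear on the countability/constancy argument you sketch; as written, your step three is a plausibility heuristic rather than a proof.
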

It proves the second case of conjecture \ref{conja} for a nef hypersurface assuming the Lagrangian conjecture. \\

\textbf{Case of $q(Y,Y)>0$}. In this paper we prove the last case of conjecture \ref{conja} and also remark that the second case holds for a non-nef hypersurface. Let us write up the structure of the article and state the main theorems. First we prove the conjecture \ref{conja} for a nef and big hypersurface.    
 \begin{thm}\label{tnef}
Let $Y$ be a smooth nef and big hypersurface in an irreducible holomorphic symplectic manifold $X$. Then a generic leaf of the characteristic foliation on $Y$ is Zariski dense in the hypersurface $Y$. 
\end{thm}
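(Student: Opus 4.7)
The plan is to argue by contradiction: we assume the generic leaf of the characteristic foliation $F$ is not Zariski-dense in $Y$ and derive a contradiction with $Y$ being nef and big. First, since $X$ is IHS we have $K_X = 0$, so adjunction gives $K_Y \cong \mathcal{O}_Y(Y)$. Nef and big of $Y$ on $X$ is inherited by its restriction to $Y$, so $K_Y$ is nef and big: thus $Y$ is a smooth projective variety of general type, in particular not uniruled, and by Theorem \ref{ac} the characteristic foliation $F$ is not algebraically integrable.

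Let $Z$ be the Zariski closure of a generic leaf and set $d := \dim Z$; by assumption $2 \le d \le 2n-2$. The leaf closures sweep out $Y$ and define a dominant rational map $p \colon Y \dashrightarrow B$ whose general fiber is $Z$, with $F$ tangent to the fibers. Since $F \subset T_Z$ and $F \subset \ker \sigma|_Y$, we have $F \subset \ker \sigma|_Z$; combined with the Zariski-density of leaves of $F$ in $Z$, this should force $Z$ to be coisotropic in $X$, so $d \ge n$, and the generic fiber of $p$ has dimension between $n$ and $2n-2$.

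The crucial step is to show that $p$ extends to a rational Lagrangian fibration $\pi \colon X \dashrightarrow C$ on the whole of $X$, generalizing the four-dimensional Theorem \ref{ag} of Amerik--Guseva. The idea is that the coisotropic reductions of the various leaf closures $Z$ glue to define such a rational map $\pi$; by Matsushita's theorem, a dominant fibration from an IHS manifold onto a base of intermediate dimension is Lagrangian, so $\dim C = n$. Once $\pi$ is constructed, the hypersurface $Y$ must be the pullback of a hypersurface from $C$, as $F$ is tangent to the fibers of $\pi$ and $Y$ is a union of fibers of $p$. Any class pulled back from the base of a Lagrangian fibration is isotropic for the Beauville--Bogomolov--Fujiki form, giving $q(Y,Y) = 0$. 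But Fujiki's relation $Y^{2n} = c_X \cdot q(Y,Y)^n$ with $c_X > 0$, together with $Y$ nef and big (hence $Y^{2n} > 0$), yields $q(Y,Y) > 0$, a contradiction.

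The main obstacle is globalising the leaf-closure fibration on $Y$ to a rational Lagrangian fibration of $X$: one must verify that the local coisotropic reductions of the different leaf closures are compatible across the family and that the resulting map on $X$ is actually well-defined as a rational map. The nef hypothesis on $Y$ is expected to play an essential role here, analogous to its use in \cite{Ab1} to convert rational Lagrangian fibrations into regular ones; the coisotropicity and positivity inputs established in the earlier steps then combine to produce the required fibration. By contrast, the reductions to general type, the construction of $p$, and the final numerical contradiction are either standard or formal consequences.
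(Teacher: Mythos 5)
Your outline has a genuine gap at exactly the step you flag as the ``main obstacle'': extending the leaf-closure fibration $p\colon Y\dashrightarrow B$ to a rational Lagrangian fibration $\pi\colon X\dashrightarrow C$. This is not a technical verification to be filled in later --- it is a higher-dimensional generalization of Theorem \ref{ag} (Amerik--Guseva), which in the paper is only available for $\dim X=4$, and no such extension result is known in general (it is closely tied to the hyperk\"ahler Lagrangian fibration conjectures). Without it your argument does not close. Two secondary points: (i) your claim that the leaf closure $Z$ ``should'' be coisotropic is asserted, not proved --- note that the paper's Lemma \ref{cois} establishes the equivalence of $F$-invariance and coisotropy only in codimension $2$, and explicitly warns in a footnote that ``invariant $\Rightarrow$ coisotropic'' fails in higher codimension; (ii) you never use the Lefschetz hyperplane theorem for nef and big hypersurfaces, which the paper has to establish separately (via semismallness of the morphism given by $|kY|$ and the Esnault--Viehweg vanishing) and which is the technical heart of the nef-and-big case.

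The paper's actual route avoids the globalization problem entirely. Assuming the generic leaf is not dense, it composes the Bonnet fibration $Y\dashrightarrow B$ with a pencil on $B$ to produce a covering family of $F$-invariant subvarieties $Z$ of codimension $2$ in $X$; these are coisotropic by Lemma \ref{cois}. The LHT (Theorem \ref{lht}) then gives $[Z]=[D]\cup[Y]$ for a divisor class $D$, and coisotropy translates into $q(D,Y)=0$. Since $q(Y,Y)>0$ and the BBF form has signature $(1,h^{1,1}-1)$ on $\h^{1,1}$, one gets $q(D,D)<0$, whence $D^2\cup Y^{2n-2}<0$ by the Fujiki relation; on the other hand, intersecting two distinct members $Z_1,Z_2$ of the covering family yields an effective cycle and hence $D^2\cup Y^{2n-2}=Z_1\cdot Z_2\cdot Y^{2n-4}\geq 0$, a contradiction. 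If you want to salvage your approach, you would have to either prove the Lagrangian extension theorem in arbitrary dimension or, more realistically, adopt the paper's strategy of reducing to codimension $2$ and arguing cohomologically.
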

We begin with proving this theorem for an ample hypersurface in section \ref{secample}. To apply the methods from section \ref{secample} for a nef and big divisor we need the the Lefschetz hyperplane theorem (LHT). The LHT is well known for a smooth ample hypersurface, but in general is false for a nef and big hypersurface in an arbitrary ambient variety. However, the LHT appears to be true for a smooth nef and big hypersurface in an irreducible holomorphic symplectic manifold. Let us formulate the LHT for a nef and big hypersurface in an irreducible holomorphic symplectic manifold. This theorem also might be interesting out of context of the characteristic foliation.
\begin{thm}\label{lht}
Let $Y$ be a smooth, nef and big hypersurface in an irreducible holomorphic symplectic manifold $X$. Then for $i<\dim X$ the restriction induces an isomorphism on the cohomology groups $\h^i(X,\mathbb{Q})\cong \h^i(Y,\mathbb{Q})$.
\end{thm}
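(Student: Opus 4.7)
The plan is to run the Hodge-theoretic proof of the classical Lefschetz hyperplane theorem, substituting Kawamata--Viehweg vanishing (which holds for nef and big line bundles) for the ample Akizuki--Nakano vanishings and invoking the special features of the symplectic structure of $X$ whenever this substitution is insufficient.

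By the Hodge decomposition and strict compatibility of restriction with the Hodge filtration, it is enough to prove that for every bidegree $(p,q)$ in the required range the Dolbeault restriction $H^q(X, \Omega^p_X) \to H^q(Y, \Omega^p_Y)$ is an isomorphism. I would factor this map through $H^q(Y, \Omega^p_X|_Y)$ using the two short exact sequences
\[ 0 \to \Omega^p_X(-Y) \to \Omega^p_X \to \Omega^p_X|_Y \to 0 \]
on $X$ and
\[ 0 \to \Omega^{p-1}_Y(-Y) \to \Omega^p_X|_Y \to \Omega^p_Y \to 0 \]
on $Y$. The associated long exact sequences reduce everything to vanishing statements of the form $H^j(X, \Omega^p_X(-Y)) = 0$ and $H^j(Y, \Omega^{p-1}_Y(-Y)) = 0$ in the appropriate range of $(p, j)$.

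For $p=0$ the required vanishings are supplied by Kawamata--Viehweg. Since $K_X = \mathcal{O}_X$ on the IHS manifold, Serre duality converts $H^i(X, K_X+Y)=0$ for $i>0$ into $H^i(X, \mathcal{O}_X(-Y)) = 0$ in the relevant range. On $Y$, adjunction gives $K_Y = \mathcal{O}_Y(Y)$ and the restriction $Y|_Y$ is itself nef and big on $Y$ (nef because restriction preserves nefness, big because $Y^{2n}>0$ on $X$ forces $(Y|_Y)^{2n-1}>0$ on $Y$), so the same Kawamata--Viehweg argument applied on $Y$ gives the analogous vanishing $H^i(Y, \mathcal{O}_Y(-Y))=0$.

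The main obstacle is the case $p>0$, where the Akizuki--Nakano-type vanishing is not automatic for a line bundle that is merely nef and big. I would handle this by exploiting two ingredients specific to the IHS setting. First, the symplectic form furnishes canonical isomorphisms $\Omega^p_X \cong \Omega^{2n-p}_X$, which combined with Serre duality convert certain awkward vanishings into dual statements with a positive twist by $\mathcal{O}_X(Y)$ that are accessible to Kawamata--Viehweg. Second, since $Y$ is nef and big the linear system $|mY|$ is base-point-free for $m\gg 0$ and defines a birational contraction $f\colon X \to X'$ onto a normal projective variety on which $f_\ast Y$ is ample, so the classical Akizuki--Nakano vanishings on $X'$ can be pulled back to $X$ via the decomposition theorem for $f$. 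The most delicate step will be controlling the extra summands coming from the exceptional locus of $f$; here I would invoke the known structure of contractions on hyperk\"ahler manifolds to verify that these contributions lie outside the Lefschetz range, after which the diagram chase through the long exact sequences closes the argument.
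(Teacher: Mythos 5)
Your overall strategy --- reduce to Akizuki--Nakano-type vanishings and then use the special geometry of the contraction defined by $|mY|$ to compensate for the failure of these vanishings for general nef and big bundles --- has the right shape, and your second ingredient is close in spirit to what the paper does. But neither of your two mechanisms for the crucial case $p>0$ works as described. First, the symplectic self-duality $\Omega^p_X\cong\Omega^{2n-p}_X$ combined with Serre duality (and $K_X\cong\mathcal{O}_X$) only sends the required vanishing $H^q(X,\Omega^p_X(-Y))=0$ to $H^{2n-q}(X,\Omega^p_X(Y))=0$; for the latter to lie in the Nakano range $p+(2n-q)>2n$ one needs $p>q$, so the duality merely permutes the required statements among themselves, and in any case Nakano vanishing with a positive twist fails for merely nef and big bundles at intermediate $p$ just as the negative-twist version does (Lazarsfeld's example on the blow-up of $\mathbb{P}^3$ at a point kills both). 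Kawamata--Viehweg only ever controls the extremes $p=0$ and $p=\dim X$. Second, ``pulling back classical Akizuki--Nakano from $X'$ via the decomposition theorem'' is not a valid step: $X'$ is singular, so classical Akizuki--Nakano does not apply there, and the decomposition theorem is a statement about constructible sheaves that does not by itself yield coherent cohomology vanishing on $X$.

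What is missing is the identification of the one property of $f=\phi_{|mY|}$ that makes everything work: \emph{semismallness}. The paper proves it by first establishing hard Lefschetz for the non-ample class $[Y]$ --- deforming $(X,[Y])$ inside $Def(X)^{[Y]}$ to a very general member where $[Y]$ becomes ample, which uses $q(Y,Y)>0$ (Corollaries \ref{cordef} and \ref{hard}) --- and then observing that $[Z]\cup[Y]^{2n-2r}\neq 0$ for every subvariety $Z$ of codimension $r$ forces $\dim f(Z)\geq 2\dim Z-2n$. Once semismallness is in hand there are two ways to finish, and the paper invokes both: the Esnault--Viehweg theorem (Theorem \ref{esn}) that the full Akizuki--Nakano vanishing holds for \emph{lef} line bundles, which plugs directly into your coherent-cohomology reduction; or the Goresky--MacPherson Lefschetz theorem for semismall maps (Theorem \ref{lhtsemismal}), which proves the topological statement outright and lets you discard the Dolbeault reduction entirely. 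Without naming semismallness and one of these two theorems, the ``most delicate step'' you defer at the end is in fact the entire content of the proof.
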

 We prove it in section \ref{seclht}. In section \ref{secnef} we adapt the arguments of section \ref{secample} to prove theorem \ref{tnef}.
 The following theorem has been indicated to the author by Jorge Vitorio Pereira. 
\begin{thm}\label{per}If $Y$ is smooth and uniruled then $q(Y,Y)<0$.
\end{thm}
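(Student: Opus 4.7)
The plan is to argue by contradiction: assume $Y$ is smooth and uniruled with $q(Y,Y) \geq 0$, and derive a contradiction. First, I would apply Theorem~\ref{ac}: the characteristic foliation $F$ is algebraically integrable, and combined with uniruledness a general leaf $C$ is a smooth rational curve. Since $T_C$ is a line subbundle of the line bundle $F|_C$ on $C \cong \mathbb{P}^1$ with generic equality, one obtains $T_C = F|_C$; combining this with $F \cong \mathcal{O}_Y(-Y)$ gives $Y \cdot C = -\deg F|_C = -\deg T_C = -2$. These leaves form a covering family $\{C_t\}$ of $Y$, all satisfying $Y \cdot C_t = -2$.

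Next, I would construct an auxiliary big divisor that detects this negative intersection. Fix a very ample divisor $A$ on $X$, set $a := A \cdot C$, pick an integer $m > a/2$, and put $L := mY + A$. The polarized Fujiki relation applied to $Y \cdot A^{2n-1}$ combined with effectivity of $Y$ yields $q(Y,A) > 0$; then $q(L,L) = m^2 q(Y,Y) + 2m\, q(Y,A) + q(A,A) > 0$ under the assumption $q(Y,Y) \geq 0$. So $L$ is effective, lies in the positive cone of $H^{1,1}(X,\mathbb{R})$, and is therefore big, while $L \cdot C_t = -2m + a < 0$ by our choice of $m$.

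Finally, I would invoke the divisorial Zariski decomposition $L = P + N$ of Boucksom, with $P$ the movable positive part and $N = \sum_i \nu_i E_i$ the effective negative part. The movability $\sigma_Y(P) = 0$, combined with the covering family: for each $\epsilon > 0$ and $k$ large the linear system $|k(P + \epsilon A)|$ contains some effective divisor $D$ not containing $Y$, and $D \cap C_t$ is a proper intersection for a general $t$, giving $P \cdot C_t + \epsilon A \cdot C_t \geq 0$; letting $\epsilon \to 0$ one obtains $P \cdot C_t \geq 0$. Hence $N \cdot C_t \leq L \cdot C_t < 0$, which forces $C_t \subseteq \mathrm{Supp}(N)$. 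Since the leaves cover the prime divisor $Y$, the divisor $Y$ itself must appear as one of the $E_i$. The concluding step is to invoke Boucksom's characterization (see \cite{Bo}) that every prime component of the negative part of a divisorial Zariski decomposition on an irreducible holomorphic symplectic manifold has negative BBF square, giving $q(Y,Y) < 0$ and the required contradiction. The hard part is this final step, which relies on the direction of Boucksom's theory complementary to the one quoted in the introduction.
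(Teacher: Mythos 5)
Your argument is correct, but it takes a genuinely different (and considerably longer) route than the paper. The paper's proof is three lines: by adjunction $\omega_Y\cong\ob_Y(Y)$, and the canonical bundle of a uniruled variety is not pseudoeffective; since a modified nef class restricts to a pseudoeffective class on every prime divisor (\cite[Proposition~2.4]{Bo}), $Y$ is not modified nef, hence exceptional in Boucksom's sense; and \cite[Theorem~4.5]{Bo} says an exceptional prime divisor on an IHS manifold has negative BBF square. You reach the same endpoint by a hands-on detour: you use the foliation to produce a covering family of rational leaves with $Y\cdot C_t<0$, build the auxiliary class $L=mY+A$, and push it through the divisorial Zariski decomposition to force $Y$ into the negative part, whose components are exceptional. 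In effect you re-prove by hand the statement ``a prime divisor that is negative on a covering family of its own curves is exceptional,'' which the paper gets for free from Boucksom's restriction property plus the non-pseudoeffectivity of $\omega_Y$; and your ``hard final step'' is exactly the same Theorem~4.5 the paper invokes, so the two proofs rest on the same deep input. A few points in your write-up are needlessly heavy or slightly off, though none is fatal: $L=mY+A$ is big simply because it is effective plus ample, so the computation $q(L,L)>0$ and the whole contradiction framing are redundant --- your argument proves $q(Y,Y)<0$ directly; the exact value $Y\cdot C_t=-2$ requires the general leaf to be a smooth closed rational curve, whereas only $Y\cdot C_t<0$ is needed and that follows robustly on the normalization; and $\sigma_Y(P)=0$ does not literally give members of $|k(P+\epsilon A)|$ avoiding $Y$, only members whose multiplicity along $Y$ is $o(k)$, which still yields $P\cdot C_t\ge 0$ after subtracting that multiple of $Y$. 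What your approach buys is independence from the adjunction/pseudoeffectivity shortcut and a concrete geometric picture of why $Y$ is exceptional; what the paper's approach buys is brevity and no need for the Zariski decomposition machinery at all.
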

The proof uses the characteristic foliation and shall be given in section \ref{nenef}. Notice that this theorem also provides the solution of the second case of conjecture \ref{conja} in the non-nef case, completing theorem \ref{t2}.
 \begin{Ackn}I am very grateful to Ekaterina Amerik for kind and patient mentorship, to Jorge Vitorio Pereira for an excellent idea. I also want to thank Alexey Gorinov, Emanuele Macri and Misha Verbitsky for helpful conversations. The study has been partially funded within the framework of the HSE University Basic Research Program
and the Russian Academic Excellence Project ’5-100’.
 \end{Ackn}

\section{The local deformations of holomorphic symplectic manifolds}\label{torellisec}
In this section we briefly recall the description of the local deformation space of a holomorphic symplectic manifold. The aim of the section is to show that a couple $(X,Y)$ of a holomorphic symplectic manifold $X$ and a divisor $Y$ with $q(Y,Y)>0$ deforms to a holomorphic symplectic manifold with an ample divisor. \\

Let us recall the local Torelli theorem. Let $X$ be an IHS manifold and $Def(X)$ the space of its local deformations. The manifold $Def(X)$ is smooth of dimension $b_2(X)-2$ by \cite{bog}. The local system $\h^2(X,\mathbb{Z})$ with the BBF form is constant over $Def(X)$, but the subspace $\h^{2,0}(X')$ varies with the deformation $X'$. Let $\sigma'\in \h^2(X',\mathbb{C})\cong \h^2(X,\mathbb{C})$ be a holomorphic symplectic form on $X'$, using this cohomology class we define the local period map $Per: Def(X)\to \pr(\h^2(X,\Co))$. 
$$Per:X'\mapsto \Co\cdot \sigma'.$$
It is easy to see from the properties of the polarization of a Hodge structure that, $q(\sigma',\overline{\sigma'})>0$ and $q(\sigma',\sigma')=0$. Thus the image of $Per$ is contained in the quadric $Q\subset \pr(\h^2(X,\mathbb{C}))$ defined by the BBF quadratic form. The Local Torelli theorem states that locally  it is an isomorphism.
\begin{thm}[The Local Torelli theorem] The period map
$$Per: B\to Q\subset \pr(\h^2(X,\Co))$$
is surjective and biholomorphic on a small open subset of $Q$.
\end{thm}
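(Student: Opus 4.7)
The natural strategy is to apply the holomorphic inverse function theorem: show that source and target are smooth of the same dimension at the relevant points, and that the differential of $Per$ at the base point $[X]$ is an isomorphism. Both the smoothness of $Def(X)$ (Bogomolov's theorem, already invoked in the excerpt) and of $Q$ at $[\sigma]$ (since the BBF form is non-degenerate on $H^2(X,\Co)$ and $q(\sigma,\overline{\sigma})>0$ so $[\sigma]$ is not a singular point of the quadric) are standard.

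\textbf{Step 1 (Dimension matching).} I would compute $\dim Def(X) = h^1(X,T_X)$ and use the isomorphism $T_X \xrightarrow{\sim} \Omega^1_X$ induced by $\sigma$ (recalled in the introduction) to rewrite this as $h^{1,1}(X)$. Since $h^{2,0}(X)=h^{0,2}(X)=1$, this equals $b_2(X)-2$. On the target side, $Q$ is a smooth quadric in $\pr(H^2(X,\Co))$, hence a complex manifold of dimension $b_2(X)-2$. So source and target have the same dimension.

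\textbf{Step 2 (Differential of $Per$).} By Kodaira--Spencer and the infinitesimal description of how Hodge filtrations vary, the differential
\[
dPer_{[X]} : H^1(X,T_X) \longrightarrow T_{[\sigma]}\pr(H^2(X,\Co)) = H^2(X,\Co)/\Co\sigma
\]
sends a class $v$ to the class of the contraction $v \lrcorner \sigma \in H^1(X,\Omega^1_X)$. Because $\sigma$ yields an isomorphism $T_X \cong \Omega^1_X$ of vector bundles, this contraction map induces an isomorphism $H^1(X,T_X) \xrightarrow{\sim} H^1(X,\Omega^1_X) = H^{1,1}(X)$. Combined with the fact that $H^{1,1}(X) \cap \Co\sigma = 0$ (because $\Co\sigma = H^{2,0}$), this shows $dPer_{[X]}$ is injective with image $H^{1,1}(X) \subset H^2(X,\Co)/\Co\sigma$.

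\textbf{Step 3 (Image lands in $T_{[\sigma]}Q$, and is everything).} The tangent space to $Q$ at $[\sigma]$ is $\sigma^\perp/\Co\sigma$, where $\perp$ is taken with respect to $q$. Since $q(\sigma,\sigma)=0$ we have $\Co\sigma \subset \sigma^\perp$, and from $q(\sigma,\overline\sigma)\neq 0$ together with the Hodge decomposition one sees $\sigma^\perp = H^{2,0}\oplus H^{1,1}$. Hence $T_{[\sigma]}Q = H^{1,1}(X)$, which coincides with the image of $dPer_{[X]}$ computed in Step 2. So $dPer_{[X]}$ is an isomorphism of tangent spaces of equal dimension $b_2(X)-2$, and the inverse function theorem yields the claimed local biholomorphism (in particular, surjectivity onto a small neighborhood of $[\sigma]$ in $Q$).

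\textbf{Main obstacle.} The only non-formal input is the identification of $dPer$ with contraction by $\sigma$ in Step 2; this is the Griffiths infinitesimal period relation specialized to the class of $\sigma$, and is the crucial point where the symplectic structure converts the abstract deformation-theoretic map $H^1(T_X) \to H^2(X,\Co)/\Co\sigma$ into a concrete isomorphism with $H^{1,1}$. Everything else is dimension bookkeeping.
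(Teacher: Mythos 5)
Your argument is correct and is precisely the standard Beauville--Bogomolov proof of local Torelli: equality of dimensions $\dim Def(X)=h^1(X,T_X)=h^{1,1}(X)=b_2(X)-2=\dim Q$, identification of $dPer$ at the base point with contraction against $\sigma$ landing isomorphically onto $\h^{1,1}(X)=T_{[\sigma]}Q=\sigma^{\perp}/\Co\sigma$, and the inverse function theorem. The paper itself states this theorem as a known classical result and gives no proof, so there is nothing to compare against; your reconstruction is the accepted one, with the key non-formal input (the Griffiths infinitesimal period relation in Step 2) correctly isolated.
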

Next, we describe a relation between a deformation $X'$ and its image under the period map.
\begin{lem}\label{perdiv}Let $\alpha$ be a Hodge class in $\h^2(X,\mathbb{Z})$. Than it remains Hodge class on a deformation $X'\in B$ if and only if the class $\sigma'$ as above is orthogonal to $\alpha$. We denote this subspace of $Def(X)$ by $Def(X)^{\alpha}$.
\end{lem}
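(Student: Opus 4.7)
The plan is to reduce the question to the standard fact that the Beauville--Bogomolov--Fujiki form polarizes the weight-two Hodge structure of $X$ in such a way that $H^{1,1}(X')$ is cut out, inside $H^2(X,\mathbb{C})$, by $q$-orthogonality to the period line $\mathbb{C}\cdot\sigma'$.

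First I would recall what ``remains a Hodge class'' means: the integral class $\alpha$ stays Hodge on $X'$ precisely when, under the constant identification $H^2(X,\mathbb{C})\cong H^2(X',\mathbb{C})$, we have $\alpha\in H^{1,1}(X')$, i.e.\ $\alpha$ is orthogonal (for the cup-product pairing) to the holomorphic $2$-forms on $X'$. Since the space of such forms is $\mathbb{C}\cdot\sigma'$, this amounts to a single orthogonality condition.

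The heart of the argument is then to replace the cup product pairing by the BBF form $q$. For an IHS manifold the Hodge decomposition of $H^2$ is orthogonal with respect to $q$: the lines $H^{2,0}(X')$ and $H^{0,2}(X')$ are isotropic, they are in $q$-duality with each other, and both are $q$-orthogonal to $H^{1,1}(X')$. This is the standard fact that $q$ polarizes the weight-$2$ Hodge structure of an IHS manifold; it follows from Fujiki's relation
\[
q(\beta,\beta)^n = c\int_X \beta^{2n}
\]
(up to a positive constant) together with the Hodge--Riemann bilinear relations, or, concretely, from the identification of $q$ restricted to $(H^{2,0}\oplus H^{0,2})$ with the pairing $\int_X \sigma'\wedge\bar\sigma'\wedge \omega^{2n-2}/\,(2n-2)!$ for a suitable reference class. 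Consequently, the $(1,1)$-part of $H^2(X',\mathbb{C})$ is exactly
\[
H^{1,1}(X')=\bigl(\mathbb{C}\sigma'\oplus\mathbb{C}\bar\sigma'\bigr)^{\perp_q}.
\]

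Finally, I would conclude: because $\alpha$ comes from $H^2(X,\mathbb{Z})\subset H^2(X,\mathbb{R})$ it is a real class, so $q(\alpha,\bar\sigma')=\overline{q(\alpha,\sigma')}$, and the two orthogonality conditions collapse into the single condition $q(\alpha,\sigma')=0$. Hence $\alpha\in H^{1,1}(X')$ iff $\sigma'\perp_q\alpha$, which is exactly the claim. The only step that is not completely mechanical is the polarization statement relating $q$ to the Hodge decomposition; since this is classical for IHS manifolds (going back to Beauville and Fujiki), I would simply invoke it rather than reprove it.
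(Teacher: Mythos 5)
Your proof is correct, and it is the standard argument: the paper itself states Lemma \ref{perdiv} without proof, so there is no competing approach to compare against; what you give --- $\alpha$ stays of type $(1,1)$ on $X'$ iff its $(2,0)$- and $(0,2)$-components vanish, the BBF form $q$ polarizes the weight-two Hodge structure so that $H^{1,1}(X')=(\mathbb{C}\sigma'\oplus\mathbb{C}\bar\sigma')^{\perp_q}$, and reality of $\alpha$ collapses the two conditions $q(\alpha,\sigma')=0$ and $q(\alpha,\bar\sigma')=0$ into one --- is exactly the classical justification. One small imprecision: your opening gloss that being in $H^{1,1}(X')$ means being ``orthogonal for the cup-product pairing'' to $\sigma'$ is only literally a pairing when $\dim X=2$; for $2n$-folds the correct statement is the $q$-orthogonality you use afterwards, so the argument as a whole is unaffected.
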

So the set of the deformations of $X$ preserving a divisor $D$ is a codimension one submanifold of $B$. 
\begin{prop}\cite[Theorem~2]{H1}\label{thuy} Let $X$ be a holomorphic symplectic manifold. Then $X$ is projective if and only there exists a divisor $Y$ with $q(Y,Y)>0$.
\end{prop}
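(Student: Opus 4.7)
I will treat the two implications separately.

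For the forward direction, I will use that Kähler classes on an IHS always have strictly positive BBF square (a standard Hodge-theoretic fact, following from the Fujiki relation $\int_X \alpha^{2n} = c_X \, q(\alpha,\alpha)^n$ together with positivity of pairings of Kähler classes against the $(n-1,n-1)$-form $\sigma^{n-1}\overline{\sigma}^{n-1}$). Since an ample $H$ is Kähler, the divisor $Y=H$ furnishes $q(Y,Y)>0$.

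For the converse, given $Y\in\pic(X)$ with $q(Y,Y)>0$, the strategy is to deform $X$ inside $Def(X)^Y$ until one reaches a projective IHS $X_t$ arbitrarily close to $X$, and then transfer projectivity back. By Lemma~\ref{perdiv}, $Def(X)^Y$ is a smooth codimension-one submanifold of $Def(X)$, biholomorphic via the local Torelli theorem to an open neighborhood of $[\sigma]$ in the hyperplane section $Q\cap Y^{\perp}$ of the period quadric. Each integral class $\alpha\in H^2(X,\mathbb{Z})\setminus \mathbb{Q}\cdot Y$ cuts out, again by Lemma~\ref{perdiv}, a further proper closed submanifold $Def(X)^\alpha\cap Def(X)^Y$; since $H^2(X,\mathbb{Z})$ is countable, the union of these is a meager subset, and outside it one finds deformations $X_t$ arbitrarily close to $X$ in $Def(X)^Y$ with $\pic(X_t)=\mathbb{Z}\cdot Y_t$.

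For such an $X_t$, I will argue that $Y_t$ or $-Y_t$ is ample. The Kähler cone of $X_t$ is an open subcone of the positive component of the positive cone $\{q>0\}\subset H^{1,1}(X_t,\mathbb{R})$; its intersection with the one-dimensional rational subspace $\pic(X_t)\otimes\mathbb{R}=\mathbb{R}\cdot Y_t$ cannot be broken by additional walls, since walls would require extra integral classes in $\pic(X_t)$, contradicting the rank-one assumption. Hence this intersection is one of the two open rays through the origin, making $Y_t$ or $-Y_t$ ample and $X_t$ projective. To transfer projectivity back to $X$ itself, note that Kähler-ness of a fixed real $(1,1)$-class is an open condition under deformation of the complex structure; taking $X_t$ close enough to $X$, the class $[Y]$ remains Kähler on $X$, and since it is rational, Kodaira's embedding theorem finishes the argument.

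The main obstacle I foresee is the step \emph{``$\pic(X_t)=\mathbb{Z}\cdot Y_t$ with $q(Y_t)>0$ implies $X_t$ projective''}. This is where Huybrechts' description of the Kähler cone of an IHS as an explicit open subcone of the positive component of $\{q>0\}$ is essential: without it, one would have to worry about walls crossing the line $\mathbb{R}\cdot Y_t$, and only the rank-one constraint on $\pic(X_t)$ forbids them. The remaining ingredients --- countable density of Picard-rank-one deformations inside $Def(X)^Y$, openness of Kähler-ness, and Kodaira embedding --- are standard.
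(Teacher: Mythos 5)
The paper does not prove this proposition at all: it is quoted verbatim from Huybrechts (\cite[Theorem~2]{H1}), whose corrected proof (in the Erratum) runs quite differently from yours --- it shows that an integral $(1,1)$-class $\alpha$ with $q(\alpha,\alpha)>0$ (after possibly replacing $\alpha$ by $-\alpha$) carries a K\"ahler current, hence is big, so $X$ is Moishezon and, being K\"ahler, projective; the essential input is the Demailly--P\u{a}un description of the K\"ahler cone, with no deformation argument. Your forward implication (Fujiki relation plus positivity of $\int \omega^2\sigma^{n-1}\bar\sigma^{n-1}$ for K\"ahler $\omega$) is fine.

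Your converse has two genuine gaps. First, the step ``$\pic(X_t)=\mathbb{Z}\cdot Y_t$ and $q(Y_t,Y_t)>0$ imply $\pm Y_t$ ample'' is not established by your wall argument: the K\"ahler cone is an open subcone of the positive cone inside the full space $\h^{1,1}(X_t,\R)$, whose dimension is $h^{1,1}\geq 3$, and there is no a priori reason the one-dimensional line $\R\cdot Y_t$ meets it at all. The claim that the walls bounding the K\"ahler cone are cut out by \emph{integral} classes is not elementary --- it is a consequence of the Boucksom--Huybrechts/Amerik--Verbitsky description of the K\"ahler cone, which itself rests on Demailly--P\u{a}un and is at least as deep as the proposition you are proving; as written, your argument is circular at exactly the point you flag as ``essential.'' Second, the transfer-back step inverts the direction of openness: the fact that K\"ahlerness of a class is open in the complex structure says that a class K\"ahler on $X$ stays K\"ahler nearby, not that a class K\"ahler on nearby $X_t$ is K\"ahler on the limit $X$ (limits of K\"ahler classes are only in the closure of the K\"ahler cone, and projectivity is neither open nor closed in families of IHS manifolds). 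In fact your conclusion that $[Y]$ itself becomes K\"ahler on $X$ is false in general: Section~\ref{nenef} of the paper is devoted precisely to divisors with $q(Y,Y)>0$ that are not nef, cf.\ Proposition~\ref{mz}. A correct proof must produce \emph{some} ample class on $X$ directly (as Huybrechts does via bigness and Moishezon's theorem), not deform away and come back.
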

\begin{cor}\label{cordef}Let $X$ be a holomorphic symplectic manifold and $Y$ a divisor with $q(Y,Y)>0$. Denote by $\alpha\in \h^2(X,\mathbb{Z})$ the class of $Y$. Then the class $\alpha$ is ample in a very general deformation $X'$ of $X$ preserving the Hodge class $\alpha$ (i.e. $[X'] \in Def(X)^{\alpha}$).   
\end{cor}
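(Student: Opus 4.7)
The plan is to combine the local period map with Proposition \ref{thuy} to produce a deformation $X'$ of $X$ whose Picard group is generated by $\alpha$, and then to argue that on such an $X'$ the class $\alpha$ sits on the ample ray rather than on the anti-ample one.

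By Lemma \ref{perdiv}, the locus $Def(X)^\alpha$ is cut out in $Def(X)$ by the single linear condition $q(\sigma', \alpha) = 0$, so it is a codimension one complex submanifold. For any integral class $\beta \in \h^2(X, \mathbb{Z})$ not proportional to $\alpha$, the further condition that $\beta$ remain Hodge on $X'$ is $q(\sigma', \beta) = 0$, which defines a proper analytic subset of $Def(X)^\alpha$ (the two linear conditions are independent because $q$ is non-degenerate and $\alpha, \beta$ are linearly independent). Since $\h^2(X, \mathbb{Z})$ is countable, a very general $X' \in Def(X)^\alpha$ avoids all such subsets. At such $X'$ the only integral $(1,1)$-classes are integer multiples of $\alpha$, and since $\h^1(X', \ob_{X'}) = 0$ is preserved under deformation, the exponential sequence gives $\pic(X') \cong \h^{1,1}(X', \mathbb{Z})$. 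Hence $\pic(X')$ has rank one, generated by a line bundle with first Chern class $\alpha$.

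The BBF square $q(\alpha, \alpha) > 0$ is topological, so it persists on $X'$; by Proposition \ref{thuy}, $X'$ is projective. Since $\pic(X')$ has rank $1$, the ample cone is spanned either by $\alpha$ or by $-\alpha$. To determine which, note that on the original $X$ the effective class $\alpha = [Y]$ pairs positively with every K\"ahler class and satisfies $q(\alpha, \alpha) > 0$; hence $\alpha$ lies in the positive cone of $X$, defined as the connected component of $\{x \in \h^{1,1}(X, \R) : q(x,x) > 0\}$ containing K\"ahler classes. The Hodge decomposition varies smoothly along $Def(X)^\alpha$, and hence so does the positive cone; restricting to a sufficiently small neighborhood of $[X]$ in $Def(X)^\alpha$ ensures that $\alpha$ remains in the positive cone of $X'$. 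Since the ample cone is always contained in the positive cone, and in our case is spanned by $\pm \alpha$, it must in fact be spanned by $\alpha$, so $\alpha$ is ample on $X'$.

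The main technical subtlety is the last step: one has to be sure that the very general $X'$ with rank-one Picard group can still be chosen close enough to $X$ for the continuity argument to force $\alpha$ (rather than $-\alpha$) onto the ample ray. This is handled by first fixing a small neighborhood $U$ of $[X]$ in $Def(X)^\alpha$ on which the continuity of the positive cone applies, and then choosing the very general point inside $U$; this remains possible because a countable union of proper analytic subsets of $U$ is still a proper subset of $U$.
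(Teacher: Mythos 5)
Your proposal is correct and follows essentially the same route as the paper: a countability argument via Lemma \ref{perdiv} to get $\ns(X')$ generated by $\alpha$ at a very general point of $Def(X)^{\alpha}$, then Proposition \ref{thuy} for projectivity, then ampleness of $\alpha$. The only difference is that you explicitly resolve the $\alpha$ versus $-\alpha$ ambiguity via positivity of $q(\alpha,\omega)$ for $\omega$ K\"ahler and continuity of the positive cone, a point the paper's one-line proof (``hence $\alpha$ is ample'') leaves implicit; your treatment of it is sound.
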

\begin{proof}
It follows from lemma \ref{perdiv} that the $\mathrm{NS}(X')$ is generated by $\alpha$. By proposition \ref{thuy} $X'$ is projective and hence $\alpha$ is ample on $X'$. 
\end{proof}
\section{Lefschetz's theorems}
For the proof of theorem \ref{lht} below we will use some specific versions of the Lefschetz's theorem about hyperplane section and hard  Lefschetz's theorem. Let us first recall the hard Lefschetz theorem.
\begin{thm}Let $X$ be a compact K\"{a}hler manifold of dimension $m$ (for example projective) and let $\omega \in \h^2(X,\Co)$ be a  K\"{a}hler class (an ample class). Define the Lefschetz operator $$L: \h^k(X,\Co)\to \h^{k+2}(X,\Co)$$ as the cup-product with the  K\"{a}hler class $\omega$. Then the operator $$L^{m-k}: \h^k(X,\Co) \xrightarrow{\sim} \h^{2m-k}(X,\Co)$$
is an isomorphism for every $k \leq m$.
\end{thm}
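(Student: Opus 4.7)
The plan is to deduce the Hard Lefschetz theorem from Hodge theory combined with the representation theory of $\mathfrak{sl}_2(\mathbb{C})$. The idea is to enlarge the single operator $L$ to a triple of operators satisfying the standard commutation relations, observe that these operators descend to cohomology, and then read off the statement from the classification of finite-dimensional $\mathfrak{sl}_2$-representations.

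More precisely, first I would fix a K\"ahler metric on $X$ whose associated $(1,1)$-form represents $\omega$, and introduce, alongside $L = \omega \wedge \cdot$, the formal adjoint $\Lambda = L^{*}$ with respect to the induced Hermitian inner product on forms, together with the grading operator $H$ acting as multiplication by $k - m$ on $\h^{k}$-level forms. The core computation is then to verify the Kähler identities, in particular
$$[L,\Lambda] = H, \qquad [H,L] = 2L, \qquad [H,\Lambda] = -2\Lambda,$$
so that $(L,\Lambda,H)$ defines an $\mathfrak{sl}_2(\mathbb{C})$-representation on the space of smooth differential forms on $X$.

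Next I would use the Kähler identities for $\partial^{*}, \bar\partial^{*}$ to show that $L$ and $\Lambda$ commute with the Laplacian $\Delta$. By Hodge theory, harmonic representatives give a canonical isomorphism between the space of harmonic forms and $\h^{*}(X,\Co)$, and the above operators preserve harmonicity. Hence the $\mathfrak{sl}_2(\Co)$-action descends to a finite-dimensional representation on $\bigoplus_{k} \h^{k}(X,\Co)$, and the $k$-th cohomology is precisely the $(k-m)$-eigenspace of $H$.

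Finally I would invoke the classification of finite-dimensional representations of $\mathfrak{sl}_2(\Co)$: such a representation splits as a direct sum of irreducibles, and on every irreducible the operator $L^{j}$ restricts to an isomorphism from the $(-j)$-weight space to the $(+j)$-weight space. Applied to $\h^{k}(X,\Co)$, with weight $k-m \leq 0$, this yields the desired isomorphism $L^{m-k}: \h^{k}(X,\Co) \xrightarrow{\sim} \h^{2m-k}(X,\Co)$. The only substantive analytic input is the Kähler identities, which are the main obstacle; everything else is linear algebra once Hodge theory is available.
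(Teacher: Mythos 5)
Your proposal is correct and is precisely the standard argument behind the result the paper invokes: the paper gives no proof of its own but simply cites \cite[Theorem~6.25]{Vo1}, where the theorem is established exactly via the K\"ahler identities, the resulting $\mathfrak{sl}_2(\Co)$-action commuting with the Laplacian, and the weight-space structure of finite-dimensional $\mathfrak{sl}_2$-representations. There is nothing to compare beyond noting that your outline reproduces the cited proof faithfully.
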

\begin{proof}
See \cite[Theorem~6.25]{Vo1}
\end{proof}
\begin{cor}\label{hard}Let $X$ a be holomorphic symplectic manifold and $Y$ a divisor with $q(Y,Y)>0$. Then the hard Lefschetz theorem is true for $Y$ even if $Y$ is not ample.
\end{cor}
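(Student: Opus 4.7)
The approach is to reduce to the already-known ample case via the deformation argument of Corollary \ref{cordef}. Set $\alpha = [Y] \in \h^2(X, \mathbb{Z})$ and $m = \dim X = 2n$. By Corollary \ref{cordef}, there is a very general deformation $X' \in Def(X)^{\alpha}$ on which $\alpha$ remains a Hodge class and is moreover ample.

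The Kuranishi family $\pi : \mathcal{X} \to Def(X)$ is proper and smooth, so Ehresmann's theorem provides a diffeomorphism between $X$ and $X'$, and hence a canonical identification of graded cohomology rings $\h^*(X, \mathbb{C}) \cong \h^*(X', \mathbb{C})$ (obtained by parallel transport along the Gauss--Manin connection, or equivalently from the local system $R^\bullet \pi_* \mathbb{C}$). Under this identification the class $\alpha$ and the cup-product structure are preserved, since both are topological in nature. Consequently the Lefschetz operator $L_\alpha = {\cup\, \alpha}$ on $\h^*(X, \mathbb{C})$ is intertwined with its counterpart on $\h^*(X', \mathbb{C})$.

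On $X'$ the class $\alpha$ is ample, hence a positive multiple of it is represented by a K\"ahler form, so the classical hard Lefschetz theorem cited above gives isomorphisms $L_\alpha^{m-k} : \h^k(X', \mathbb{C}) \xrightarrow{\sim} \h^{2m-k}(X', \mathbb{C})$ for every $k \leq m$. Transporting these back to $X$ through the identification of cohomology yields the same isomorphisms for the cup product with $[Y]$ on $\h^*(X, \mathbb{C})$, which is precisely the assertion of the corollary.

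No serious obstacle is expected: once Corollary \ref{cordef} is at hand, the entire content of the proof is the topological invariance of the cohomology ring under a smooth proper deformation, which is a standard consequence of Ehresmann's theorem. The only conceptual subtlety is that, although $Y$ itself need not be ample on the original $X$, the deformation trick is available precisely because the hard Lefschetz property depends only on the topological cup-product and the class $\alpha$, both of which are constant over $Def(X)$.
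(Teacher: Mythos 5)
Your proposal is correct and follows exactly the paper's own argument: deform via Corollary \ref{cordef} to a member of $Def(X)^{\alpha}$ where $\alpha=[Y]$ becomes ample, then transport the hard Lefschetz isomorphisms back through the topological identification of cohomology rings. You simply spell out the Ehresmann/local-system justification that the paper leaves implicit.
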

\begin{proof}
By corollary \ref{cordef} there is a deformation $X'$ of $X$ preserving the Hodge class $[Y]$ such that this class is ample in $X'$. The cohomology rings of $X$ and $X'$ are isomorphic. Hence the hard Lefschetz theorem holds for $Y$ in $X$.
\end{proof}
Next we recall the classical version of the Lefschetz hyperplane section theorem.
\begin{thm}[LHT]Let $X\subset \pr^N$ be a smooth projective variety of dimension $n$. Take a smooth hyperplane section $Y:=X\cap \pr^{N-1}$. For any $k$, $0 \leq k<n-1$ the restriction map 
$$\h^k(X,\mathbb{Z})\rightarrow \h^k(Y,\mathbb{Z}).$$ 
is an isomorphism, and for $k=n-1$ it is injective.
 
\end{thm}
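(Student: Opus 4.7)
The plan is to reduce the statement to a vanishing of relative cohomology, via the long exact sequence of the pair $(X,Y)$, and then establish that vanishing using the fact that $X\setminus Y$ is an affine variety. First I would write down
\[
\cdots \to \h^{k}(X,Y;\mathbb{Z}) \to \h^{k}(X,\mathbb{Z}) \to \h^{k}(Y,\mathbb{Z}) \to \h^{k+1}(X,Y;\mathbb{Z}) \to \cdots
\]
and observe that the restriction is an isomorphism in degree $k$ as soon as both $\h^{k}(X,Y;\mathbb{Z})$ and $\h^{k+1}(X,Y;\mathbb{Z})$ vanish, and is injective in degree $k$ as soon as $\h^{k}(X,Y;\mathbb{Z})$ vanishes. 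So it suffices to prove that $\h^{j}(X,Y;\mathbb{Z})=0$ for all $j\le n-1$.

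The key input is the affineness of $U:=X\setminus Y$. Since $Y=X\cap \pr^{N-1}$, the complement $U$ is a closed subvariety of $\pr^{N}\setminus \pr^{N-1}\cong \mathbb{A}^{N}$, hence affine of complex dimension $n$. By the Andreotti--Frankel theorem, a Stein (in particular, affine) variety of complex dimension $n$ has the homotopy type of a CW complex of real dimension at most $n$; in particular $\h_{i}(U,\mathbb{Z})=0$ for $i>n$. Now I would invoke Poincaré--Lefschetz duality for the oriented real $2n$-manifold $U$, which gives
\[
\h^{j}_{c}(U,\mathbb{Z})\cong \h_{2n-j}(U,\mathbb{Z}) = 0\qquad\text{for every }j<n.
\]
Finally, since $X$ is compact and $Y\subset X$ is a closed subvariety, excision identifies $\h^{j}_{c}(U,\mathbb{Z})$ with the relative cohomology $\h^{j}(X,Y;\mathbb{Z})$. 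Plugging the resulting vanishing into the long exact sequence above yields the isomorphism $\h^{k}(X,\mathbb{Z})\cong \h^{k}(Y,\mathbb{Z})$ for $0\le k\le n-2$ and the injectivity for $k=n-1$, exactly as claimed.

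The genuinely nontrivial ingredient is Andreotti--Frankel, whose proof relies on Morse theory: one takes the squared-distance function from a generic point on an affine embedding of $U$, shows that all its critical points have Morse index $\le n$, and concludes via the standard handle-attachment arguments. Once this input is granted, the remainder of the argument is formal, consisting only of the excision identification $\h^{j}_{c}(U)\cong \h^{j}(X,Y)$ and a diagram chase in the long exact sequence of the pair.
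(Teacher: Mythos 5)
Your argument is correct and complete: the reduction via the long exact sequence of the pair $(X,Y)$, the identification $\h^{j}(X,Y;\mathbb{Z})\cong \h^{j}_{c}(X\setminus Y;\mathbb{Z})$, and the vanishing of the latter for $j<n$ via Andreotti--Frankel plus Poincar\'e--Lefschetz duality is the standard Morse-theoretic proof of the Lefschetz hyperplane theorem, and it delivers exactly the stated range (isomorphism for $k\le n-2$, injectivity for $k=n-1$) with integral coefficients. The paper itself offers no proof here --- it merely recalls this classical statement before moving on to the semismall and nef-and-big variants it actually needs --- so there is no argument to compare yours against. One cosmetic remark: the Andreotti--Frankel theorem as classically stated applies to smooth affine varieties (the singular/Stein case is due to Hamm), but $U=X\setminus Y$ is smooth here since $X$ is, so your invocation is legitimate as written.
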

To study nef and big hypersurfaces in an irreducible holomorphic symplectic manifold we will need a version of the Lefschetz theorem about hyperplane section for semismall maps. Start with defining semismall maps. 
\begin{dfn}\label{semismall}
Let $f: X \to Y$ be a morphism of algebraic varieties. We call $f$ {\bf semismall} if for every subvariety $Z$ of $X$, the following inequality holds $$2\dim Z\leq \dim X+\dim f(Z).$$ 
\end{dfn}

\begin{thm}\label{lhtsemismal}\cite[Part~II, Sect~1.1]{GM}
Let $f: X \to \pr^N$ be a (not necessarily proper) semismall map of a non-singular purely $n$-dimensional algebraic variety into the complex projective space. Take a smooth hyperplane section $Y:=X\cap f^{-1}(\pr^{N-1})$. Then for any $0 \leq k<n-1$ the restriction map induces an isomorphism of the cohomology groups:
$$\h^k(X,\mathbb{Q})\xrightarrow{\sim}\h^k(Y,\mathbb{Q}).$$ 
\end{thm}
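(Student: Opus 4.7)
The plan is to reduce Theorem~\ref{lhtsemismal} to an affine Lefschetz-type vanishing for semismall maps, and then prove the latter by stratified Morse theory.

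\emph{Reduction.} Set $U = X \setminus Y = f^{-1}(\mathbb{A}^N)$, which is smooth of complex dimension $n$. From the long exact cohomology sequence of the pair $(X,Y)$ together with the excision isomorphism $H^k(X,Y;\mathbb{Q}) \cong H^k_c(U,\mathbb{Q})$, it suffices to prove that $H^k_c(U,\mathbb{Q}) = 0$ for all $k \leq n-1$. Since $U$ is smooth of complex dimension $n$, Poincaré duality converts this into the equivalent claim $H^m(U,\mathbb{Q}) = 0$ for all $m > n$. The restriction $g := f|_U \colon U \to \mathbb{A}^N$ is again semismall, so we are reduced to proving the following: \emph{for a semismall map $g$ from a smooth irreducible $n$-dimensional variety $U$ to $\mathbb{A}^N$, one has $H^m(U,\mathbb{Q}) = 0$ for all $m > n$}.

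\emph{Affine vanishing via stratified Morse theory.} Following Goresky--MacPherson, I would fix a Whitney stratification of $g$ along which $g$ is a submersion on each stratum, and then consider the real Morse function $\varphi(x) = |g(x) - a|^2$ for a generic $a \in \mathbb{A}^N$ (perturbed and combined with an exhausting function, if necessary, to make it proper). At each stratified non-degenerate critical point on a stratum $S$ of complex codimension $c$, the tangential Morse data attaches a cell of real dimension at most $2(n-c)$, while the normal Morse data, governed by the complex link of $S$, contributes a further cell whose real dimension is controlled by the fiber dimensions of $g$. The semismall inequality $2\dim Z \leq n + \dim g(Z)$ applied to the relevant subvarieties is exactly what forces the combined attaching dimension to be bounded by $n$, so $U$ has the rational homotopy type of a CW complex of real dimension at most $n$.

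\emph{Main obstacle.} The technical heart is the Morse-theoretic bookkeeping: one must verify that semismallness simultaneously controls both the tangential and the normal Morse data at every stratified critical point on strata of varying codimension. An alternative route avoiding stratified Morse theory is to observe that $Rg_! \mathbb{Q}_U[n]$ is a perverse sheaf on $\mathbb{A}^N$ (a consequence of semismallness of $g$ and smoothness of $U$), and then invoke Artin--Grothendieck vanishing: for any perverse sheaf $P$ on an affine variety, $H^k_c(\mathbb{A}^N, P) = 0$ for $k < 0$. Applied to $P = Rg_! \mathbb{Q}_U[n]$, this gives $H^k_c(U,\mathbb{Q}) = 0$ for $k < n$, which by the reduction above completes the proof.
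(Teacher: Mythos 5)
The paper offers no proof of this statement---it is quoted from Goresky--MacPherson---so your stratified-Morse-theory sketch is essentially an outline of the cited source's argument, and your perverse-sheaf alternative is the standard modern route (de Cataldo--Migliorini). However, your reduction contains a genuine gap, and it sits exactly at the feature the theorem advertises: $f$ is \emph{not necessarily proper}, so $X$ need not be compact. The excision isomorphism $\h^k(X,Y;\mathbb{Q})\cong \h^k_c(U,\mathbb{Q})$ identifies $\h^k(X,j_!\mathbb{Q}_U)$ with compactly supported cohomology of $U$ only when $X$ is compact. Worse, the intermediate statement you reduce to is false without properness. Take $X=\pr^n\setminus\{p\}$, $f$ the inclusion into $\pr^n$ (an open embedding is semismall, since $\dim f(Z)=\dim Z$ makes the inequality $2\dim Z\le \dim X+\dim Z$ automatic), and $Y$ a hyperplane avoiding $p$. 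Then $U=f^{-1}(\mathbb{A}^n)=\mathbb{A}^n\setminus\{p\}$ is homotopy equivalent to $S^{2n-1}$, so $\h^1_c(U,\mathbb{Q})\cong \h_{2n-1}(U,\mathbb{Q})\ne 0$ with $1\le n-1$, and $\h^{2n-1}(U,\mathbb{Q})\ne 0$ with $2n-1>n$; yet the theorem holds here trivially because $X$ retracts onto $Y$, so $\h^k(X,Y)=0$ for all $k$. The same example defeats the claimed perversity of $Rg_!\mathbb{Q}_U[n]$: semismallness gives the support condition (stalks of $Rg_!$ are compactly supported cohomology of fibers), but the cosupport condition is the support condition for the Verdier dual $Rg_*\mathbb{Q}_U[n](n)$, whose stalk at $p$ sees $\h^{2n-1}$ of a punctured ball and is therefore nonzero in a forbidden degree. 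Properness is what makes $Rg_!=Rg_*$ self-dual and hence perverse.

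None of this affects the paper's application, where $f$ is the morphism $\phi\colon X\to\pr^N$ defined by $|kY|$ on a projective manifold and is therefore proper. In that case your argument closes up completely: $\h^k(X,Y;\mathbb{Q})\cong\h^k_c(U,\mathbb{Q})$, the complex $Rg_*\mathbb{Q}_U[n]$ is perverse because $g$ is proper and semismall with smooth source, and Artin vanishing on the affine base gives $\h^m(U,\mathbb{Q})=0$ for $m>n$, hence $\h^k_c(U,\mathbb{Q})=0$ for $k\le n-1$ by duality. To obtain the statement in the generality in which it is quoted, one cannot route the argument through $\h^*_c$ of the open complement; one must, as Goresky and MacPherson do, run Morse theory for the distance to the hyperplane directly on the pair $(X,Y)$, with their fringed-set technique absorbing the non-properness. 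I would either restrict the claim to proper $f$ (which is all the paper needs) or explicitly defer the non-proper case to the citation.
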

\section{Foliations and invariant subvarieties\label{s2}}
In this section we recall some definitions related to foliations and  preliminary results about the Zariski closure of the leaves. First, we define the foliations and leaves and also mention the Frobenius theorem.
\begin{dfn}Let $X$ be a smooth variety. A (singular) foliation is a saturated
subsheaf $F \subset T_X$ which is closed under the Lie bracket, i.e. $[F, F] \subset F$. The singularity locus $Sing(F)$ of $F$ is the subset of $X$ on which $T_X/F$ is not locally free, and it has codimension at least $2$ in X . A leaf of $F$ is the maximal connected injectively immersed complex analytic submanifold $L \subset X\setminus sing(F)$ such that $T_L =F|_L$.
\end{dfn}
A saturated subsheaf $F \subset T_X$ which does not necessarily satisfy the property $[F, F] \subset F$ is called a distribution. The property $[F, F] \subset F$ is needed for the existence of leafs. 
\begin{thm}[Frobenius]\label{thmfrob} Let $X$ be a smooth variety and $F\subset T_X$ a distribution on $X$. We say that $F\subset T_X$ is integrable if there exists a leaf (i.e. locally closed submanifold $L \subset X\setminus sing(F)$ such that $T_L =F|_L$) through every point of $X\setminus sing(F)$. The distribution $F$ is integrable if and only if $F$ is closed  under the Lie bracket, i.e. $[F, F] \subset F$. 
\end{thm}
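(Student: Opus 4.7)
The plan is to prove both implications of the Frobenius criterion, working locally on $X\setminus\text{sing}(F)$.

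For the easy direction, suppose $F$ is integrable and fix local sections $u,v$ of $F$ near a point $p\in X\setminus\text{sing}(F)$. At each nearby point $q$ these sections are tangent to the leaf $L_q$, so on a neighborhood of $p$ they restrict to holomorphic vector fields on each leaf through which they pass. The Lie bracket is computed from first derivatives along the manifold, and the bracket of two vector fields tangent to a submanifold is again tangent to that submanifold. Thus $[u,v]_q\in T_{L_q,q}=F_q$, yielding $[F,F]\subset F$.

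For the hard direction, assume $F$ is involutive and fix $p\in X\setminus\text{sing}(F)$. Let $k=\rk(F)$ and $n=\dim X$. Since $F$ is a subbundle on a neighborhood of $p$, I would choose holomorphic coordinates $(x_1,\ldots,x_n)$ centered at $p$ such that $F_p=\text{span}\bigl(\partial_{x_1},\ldots,\partial_{x_k}\bigr)$. Shrinking the neighborhood, the projection of $F$ onto the first $k$ coordinate directions is an isomorphism, so applying its inverse to the standard basis produces a holomorphic frame of $F$ of the form
$$v_i=\frac{\partial}{\partial x_i}+\sum_{j=k+1}^{n}a_{ij}(x)\,\frac{\partial}{\partial x_j},\qquad i=1,\ldots,k.$$
A direct computation shows that $[v_i,v_j]$ contains no $\partial_{x_\ell}$ for $\ell\leq k$. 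Involutivity forces $[v_i,v_j]\in F$, and since the $v_\ell$ project onto independent directions in $\text{span}(\partial_{x_1},\ldots,\partial_{x_k})$, any element of $F$ whose first $k$ components vanish must be zero. Hence the $v_i$ commute pairwise.

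To conclude, I invoke the simultaneous straightening theorem for pairwise commuting holomorphic vector fields linearly independent at $p$: there exist local holomorphic coordinates $(y_1,\ldots,y_n)$ centered at $p$ with $v_i=\partial/\partial y_i$ for $i\leq k$. This is proved by iterated application of the holomorphic flow-box theorem for a single vector field, exploiting the fact that commuting vector fields have commuting flows, so each $v_j$ descends to a well-defined vector field on the transverse slice used to straighten $v_{j-1}$. In these coordinates the local leaf of $F$ through a point is the submanifold $\{y_{k+1}=c_{k+1},\ldots,y_n=c_n\}$, giving the desired integral manifold. The main obstacle is the iterated straightening step, which rests on the holomorphic flow-box theorem and on the compatibility between commutativity of vector fields and commutativity of their flows; these are classical but require careful book-keeping of common domains in the holomorphic category.
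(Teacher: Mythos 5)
The paper does not prove this statement at all: it simply refers the reader to Voisin's book. Your argument is a correct, self-contained rendition of the classical proof of the holomorphic Frobenius theorem that such references contain: the easy direction via tangency of brackets of vector fields tangent to a submanifold, and the converse by normalizing a local frame of $F$ to the form $v_i=\partial_{x_i}+\sum_{j>k}a_{ij}\partial_{x_j}$, observing that involutivity plus the vanishing of the first $k$ components of $[v_i,v_j]$ forces the frame to commute, and then applying simultaneous straightening of commuting holomorphic vector fields. Two small points: you correctly work only on $X\setminus\mathrm{sing}(F)$, which is all the statement requires since the leaves are defined there and $F$ is locally free on that open set; and for the way the theorem is actually used in this paper (rank one distributions) the involutivity hypothesis $[F,F]\subset F$ is automatic, so only your hard direction, in the case $k=1$ where it reduces to the ordinary flow-box theorem, is ever invoked.
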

\begin{proof}
See for example \cite[Book~I, Theorem~2.20]{Vo}.
\end{proof}
 \begin{dfn}
 If every leaf of a foliation is algebraic we call this foliation {\bf algebraically integrable}.
 \end{dfn}
 In this paper we study only foliations of rank one. It follows from theorem \ref{thmfrob} that all distributions of rank one are foliations (i.e. integrable).  
\begin{dfn}\label{inv}  Let $Y$ be  a closed smooth subvariety of $X$. One says it is {\bf invariant under the foliation}  $F$  or {\bf $F$--invariant} if $T_Y$ contains $F|_Y$.
\end{dfn}
The Zariski closure of a leaf through a point $x$ is the smallest invariant under $F$ subvariety containing this point. We denote it by $\overline{Leaf}^{Zar}(x,F)$.
We recall some results of Philippe Bonnet from the work \cite{Bo}. He states them for an affine variety $X$. Nevertheless, these statements for an affine $X$ obviously lead to the analogous statements for a projective. variety. Thus, let us reformulate them for a projective variety $X$.
\begin{thm}\cite[Theorem~1.3]{bo2} Let $X$ be a projective variety with a foliation $F$.
There is an integer $m$ such that $m$ is equal to the dimension of the Zariski closure of the leaf of $F$ through a very general \footnote{Outside of a countable union of proper closed subvarieties.} point $x\in X$. We call this integer $m$ the dimension of the Zariski closure of a generic leaf. Moreover, the dimension of the Zariski closure of the leaf through every point $x\in X$ is not greater than $m$.
\end{thm}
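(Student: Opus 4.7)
The plan is to combine two classical ingredients: (i) the Chow variety of $X$ has only countably many irreducible components, each of finite type, and (ii) being $F$-invariant is a closed condition on those components.

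First I would reinterpret the geometric quantity. For a point $x$ in the smooth locus of $F$, the closure $\overline{Leaf}^{Zar}(x,F)$ is the smallest closed $F$-invariant subvariety of $X$ through $x$: its smooth locus is tangent to $F$ (the tangency propagates from the analytic leaf by density and by saturation of $F$ inside $T_X$), while conversely any closed $F$-invariant subvariety through $x$ contains the entire leaf and hence its Zariski closure. Consequently, setting $d(x):=\dim\overline{Leaf}^{Zar}(x,F)$, one has that $d(x)$ equals the minimum dimension of an $F$-invariant closed subvariety of $X$ passing through $x$.

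Next I would decompose $\mathrm{Chow}(X)=\bigcup_{i\in\mathbb{N}} C_i$ into irreducible components of finite type. Within each $C_i$, the sublocus $T_i\subset C_i$ parametrizing $F$-invariant cycles is closed: on the universal family $\mathcal{W}_i\to C_i$, $F$-invariance is cut out fiberwise by the vanishing of the composition $F|_{\mathcal{W}_i^{sm}}\to T_X|_{\mathcal{W}_i^{sm}}\to N_{\mathcal{W}_i/X\times C_i}|_{\mathcal{W}_i^{sm}}$, and this extends to a closed condition on the singular locus by the saturation of the subsheaf $F\subset T_X$. Form the incidence $I_i\subset X\times T_i$ with its proper projection $p_i:I_i\to X$, and let $d_i$ be the (constant on $T_i$) dimension of the cycles parametrized. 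Define $m$ to be the minimum of $d_i$ over those $i$ for which $p_i(I_i)=X$; such $i$ exists because $X$ itself is $F$-invariant, so in the worst case $m\leq \dim X$. Surjectivity of this chosen $p_i$ exhibits, for every $x\in X$, an $F$-invariant subvariety of dimension $m$ through $x$, so $d(x)\leq m$ for all $x$ — this gives the ``moreover'' clause.

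For the generic equality, by minimality of $m$ each image $p_j(I_j)$ with $d_j<m$ is a proper closed subvariety of $X$. The set
$$\Sigma \;:=\; \bigcup_{j:\,d_j<m} p_j(I_j)$$
is then a countable union of proper closed subvarieties of $X$, and at any $x\in X\setminus\Sigma$ no $F$-invariant subvariety of dimension $<m$ contains $x$, forcing $d(x)=m$. The main obstacle is the pair of foundational claims — $F$-invariance being a closed condition along Chow families, and the Zariski closure of an analytic leaf being genuinely $F$-invariant (including at its singular points) — both of which reduce to handling the saturation of subsheaves of $T_X$ carefully; once these are in place, the rest of the argument is a formal Baire-type consequence of the countable structure of $\mathrm{Chow}(X)$.
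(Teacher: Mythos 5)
Your argument is essentially correct, but it is worth saying up front that the paper does not prove this statement at all: it is quoted verbatim from Bonnet (\cite{bo2}, Theorem~1.3), with only the remark that Bonnet's affine statements ``obviously lead to'' the projective ones. Bonnet's own proof is algebraic and goes through minimal invariant subvarieties and the field of rational first integrals of the derivations defining $F$ on an affine variety, so your route is genuinely different: you run the standard Chow-theoretic countability argument --- decompose $\mathrm{Chow}(X)$ into countably many proper components, observe that $F$-invariance cuts out a closed subset $T_i$ of each, take $m$ to be the least fiber dimension among the covering families of invariant cycles, and let the very general point avoid the countably many proper closed images $p_j(I_j)$ with $d_j<m$. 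This buys a direct, global, self-contained proof for projective $X$ (and makes transparent why ``very general'' means outside a countable union of closed sets), whereas Bonnet's approach also yields the quotient map of Proposition~\ref{fibr} and finer information about first integrals. Two points in your sketch carry the real content and should not be waved through: (i) closedness of the invariance locus $T_i\subset C_i$ is best argued via upper semicontinuity of $\dim(Z_t\cap \mathrm{Tang})$, where $\mathrm{Tang}$ is the (closed) tangency locus of $F$ with the fibers of the universal family, rather than by an appeal to saturation over the singular points; and (ii) the identification of $\dim\overline{Leaf}^{Zar}(x,F)$ with the minimal dimension of an invariant subvariety through $x$ needs the fact that a closed invariant subvariety contains the whole leaf through \emph{any} of its points, including singular points of $Z$, which one gets by noting that the local flow of $F$ preserves $Z^{sm}$ and hence its closure. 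Both are standard, and with them your proof is complete; you should also restrict the ``every point'' clause to $X\setminus Sing(F)$, where leaves are actually defined, and reduce from cycles to their irreducible components when extracting an invariant subvariety of dimension $\le m$ through a given point.
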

   
\begin{prop}\cite[Theorem~1.4]{bo2}\label{fibr} 
Let $X$ be a smooth projective variety of dimension $n$ with a foliation $F$. Assume that the Zariski closure of a very general leaf of $F$ has dimension $m<n$. Then there exists a rational map $X\dashrightarrow W$ with $F$-invariant fibers of dimension $m$ and a very general fiber of this map is the Zariski closure of a leaf of $F$. 
\end{prop}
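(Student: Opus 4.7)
My plan is to realize $W$ as the parameter space of the algebraic family of Zariski closures of generic leaves, with $\phi$ being the classifying map that sends a point to the class of its leaf-closure. Let $m<n$ denote the generic leaf-closure dimension from the preceding theorem; for a very general $x$ write $Z_x:=\overline{Leaf}^{Zar}(x,F)$, an irreducible $F$-invariant subvariety of $X$ of dimension exactly $m$ passing through $x$.

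First I would cut out, inside the Chow variety $\mathrm{Chow}_m(X)$ of $m$-cycles on $X$, the locally closed subvariety $C$ parametrising reduced irreducible $m$-cycles $Z$ that are $F$-invariant in the sense that $F|_{Z^{sm}}\subset T_{Z^{sm}}$. Invariance amounts to the vanishing of the composition $F|_{Z^{sm}}\hookrightarrow T_X|_{Z^{sm}}\twoheadrightarrow (T_X/T_Z)|_{Z^{sm}}$ along the universal family, which is a closed condition; hence the $F$-invariant $m$-cycles form an algebraic family over a locally closed subscheme $C\subset\mathrm{Chow}_m(X)$.

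Next I would argue that $x\mapsto[Z_x]$ defines a rational map $\psi:X\dashrightarrow C$. Set-theoretically, for a very general $x$ the cycle $Z_x$ is the unique irreducible $F$-invariant subvariety of dimension $m$ containing $x$: it is such by construction, and if $Z'$ is another then, being $F$-invariant, $Z'$ contains the leaf through $x$ and hence its Zariski closure $Z_x$, forcing $Z'=Z_x$ by equality of dimension and irreducibility. Algebraicity of $\psi$ then follows from the incidence locus $I=\{(x,[Z])\in X\times C:x\in Z\}$, which is closed: on the Zariski open subset of $X$ on which there is a unique $[Z]\in C$ with $x\in Z$ and fibre-dimension $m$, the projection $I\to X$ admits a unique section, namely $\psi$. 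Setting $W$ to be the closure of $\psi(X)$, the induced $\phi:X\dashrightarrow W$ has $F$-invariant fibres of dimension $m$ whose very general member equals $Z_x$.

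The main obstacle is the second step, namely the algebraicity of $x\mapsto Z_x$: \emph{a priori} this assignment is only defined point by point and might vary transcendentally. Boundedness is automatic since all $Z_x$ sit in the fixed projective variety $X$, but showing that the $Z_x$ sweep out a constructible subset of $C$ is where the real work lies. Once this is granted, the construction of $\phi$ is essentially formal and can alternatively be obtained via Rosenlicht's theorem on quotients of $X$ by the algebraic equivalence relation $x\sim y\Leftrightarrow Z_x=Z_y$.
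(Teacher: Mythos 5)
The paper gives no proof of this proposition: it is quoted (transposed from Bonnet's affine setting to the projective one) from the cited Theorem~1.4 of Bonnet, so there is no in-paper argument to compare yours against. Judged on its own, your Chow-variety strategy is the standard route to statements of this kind and the skeleton is right, but, as you concede in your final paragraph, the proposal stops exactly at the step that constitutes the proof: the algebraicity of $x\mapsto Z_x$. That is a genuine gap, not a formality. The missing ingredient is a countability argument. $\mathrm{Chow}_m(X)$ has only countably many irreducible components, so your incidence locus $I=\{(x,[Z]):x\in Z\}\subset X\times C$ breaks into countably many closed subvarieties $I_j$ whose projections to $X$ are closed (properness of the Chow variety). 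Every very general $x$ lies in $\mathrm{pr}_X(I_j)$ for some $j$ (one whose component contains $[Z_x]$), and since $X(\mathbb{C})$ is not a countable union of proper closed subsets, a single $I_{j_0}$ must surject onto $X$. Your uniqueness claim then forces the fibre of $I_{j_0}\to X$ over a very general $x$ to be the single point $[Z_x]$, so $I_{j_0}\to X$ is generically injective, hence birational in characteristic $0$, and composing its inverse with $\mathrm{pr}_C$ produces $\psi$ and hence $W$. Without this (or Bonnet's alternative construction via minimal invariant varieties and rational first integrals), the assertion that ``the construction of $\phi$ is essentially formal'' assumes the conclusion rather than proving it.

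Two smaller points. First, your uniqueness argument tacitly assumes that a closed $F$-invariant subvariety $Z'\ni x$ contains the entire leaf through $x$; this is clear when $x$ is a smooth point of $Z'$ (the set of leaf points lying in $Z'$ is then open and closed in the connected leaf), but if $x\in\mathrm{Sing}(Z')$ you need the additional standard fact that the singular locus of an invariant subvariety is itself invariant, and since the competitors $Z'$ range over a possibly uncountable family you cannot simply discard their singular loci by genericity of $x$. Second, the scheme structure on $C$ needs a word of care because the universal family over $\mathrm{Chow}_m(X)$ is not flat; restricting first to the open locus of reduced irreducible cycles and imposing the vanishing of $F\to(T_X/T_Z)|_{Z^{sm}}$ there does work, but should be said.
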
{}

In the work \cite{KCT} the authors proved the following  consequence of the Bogomolov-McQuillan theorem (\cite{BM}) and of the Reeb stability theorem. We formulate it for a foliation of rank one. 
\begin{thm}\cite[Theorem~2]{KCT}\label{BogMc}
Let $X$ be a smooth variety with a regular foliation $F\subset T_X$ of rank one. Assume that there is a curve $C\subset X$, such that $\deg F|_C>0$. Then all leaves of $F$ are rational curves.
\end{thm}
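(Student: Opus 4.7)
The plan is to combine the Bogomolov--McQuillan algebraicity theorem with the Reeb stability theorem, as suggested by the attribution. The strategy has two movements: first produce a single algebraic, rational leaf, then spread this property over the whole of $X$ using the regularity of $F$.

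First, I would apply the Bogomolov--McQuillan theorem to the rank-one foliation $F$ and the curve $C$ along which $\deg F|_C > 0$. Its conclusion is that the leaf $L_0$ through a general point of $C$ is algebraic and rationally connected; since $F$ has rank one, $L_0$ is a smooth rational curve $\mathbb{P}^1 \hookrightarrow X$. Because $L_0$ is simply connected, the holonomy representation of $F$ along $L_0$ is trivial, and Reeb stability then provides an $F$-saturated open neighborhood $V \supset L_0$ in $X$ on which $F$ is induced by a smooth proper fibration with fibers $\cong \mathbb{P}^1$. Consequently, the set
$$A := \{\, x\in X : \text{the leaf of } F \text{ through } x \text{ is a smooth compact rational curve}\,\}$$
is a nonempty open subset of $X$, and it remains to show $A = X$.

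The key remaining step, which I expect to be the main obstacle, is showing that $A$ is closed in $X$; once this is done, connectedness of $X$ forces $A = X$. I would proceed via the Chow scheme of $X$: the rational leaves making up $A$ share a fixed numerical class of $1$-cycles and sweep out an irreducible component $\mathcal{H}$ of the Chow scheme; the universal family $\mathcal{L} \to \mathcal{H}$ is proper, so its image in $X$ is closed and contains $A$. For $x \in \overline{A}$, any limit of nearby rational leaves is an effective $1$-cycle tangent to $F$ at every smooth point of its support; because $F$ is everywhere regular, there is a unique leaf through $x$, and the fact that flat limits of smooth $\mathbb{P}^1$'s are trees of rational curves forces this limit to be a single smooth $\mathbb{P}^1$ equal to that leaf. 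Thus $x \in A$, so $A$ is closed and every leaf of $F$ is rational. The regularity hypothesis is crucial in exactly this step --- in the singular case, limits of rational leaves can acquire components passing through $\mathrm{Sing}(F)$, and the closedness of $A$ is substantially more delicate.
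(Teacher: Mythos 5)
The paper does not prove this statement: it is imported verbatim as \cite[Theorem~2]{KCT}, with only the remark that it follows from the Bogomolov--McQuillan theorem and Reeb stability. Your proposal therefore cannot be compared with a proof in the paper itself; what it does is reconstruct, correctly in outline, the argument of Kebekus--Sol\'a Conde--Toma along exactly the route the paper's attribution indicates: Bogomolov--McQuillan produces one algebraic, rationally connected (hence, in rank one, rational) leaf through a general point of $C$; simple connectedness of $\mathbb{P}^1$ kills the holonomy, so local Reeb stability makes the locus $A$ of compact rational leaves open; and a compactness argument in the cycle space closes it up. Two points in your closedness step deserve more care than you give them. First, the leaves in $A$ need not all lie in one component of the Chow scheme with a single numerical class; you should run the argument on a connected component of $A$ (or on the closure in the cycle space of the family produced by Reeb stability), which suffices since openness plus closedness of each such locus, together with connectedness of $X$, still forces $A=X$. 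Second, the assertion that the limit cycle is a single smooth $\mathbb{P}^1$ is better justified foliation-theoretically than via ``flat limits are trees'': since $F$ is regular of rank one, distinct leaves are disjoint and any irreducible compact curve tangent to $F$ is automatically a smooth compact leaf (locally the foliation is a fibration by discs and there is exactly one disc through each point), so a connected $F$-invariant limit cycle is supported on a single smooth compact leaf, which is rational because it is dominated by a component of a limiting genus-zero stable map. Note also that the statement implicitly requires $X$ compact (projective), which is the setting in which the paper applies it to the hypersurface $Y$; your Chow-scheme argument uses this. With these repairs your sketch is a faithful version of the cited proof.
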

\section{The case of an ample hypersurface}\label{secample}
In this section we prove conjecture \ref{conja} for a smooth and ample hypersurface $Y$. 
\begin{thm}\label{ample}
Let $Y$ be a smooth and ample hypersurface in an irreducible holomorphic symplectic manifold $X$.  Then a generic leaf of the characteristic foliation is Zariski dense in $Y$. 
\end{thm}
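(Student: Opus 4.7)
The plan is to argue by contradiction via a deformation reduction to Picard rank one. Assume that the Zariski closure of a generic leaf of $F$ has dimension $m$ with $m < \dim Y = 2n - 1$. Proposition \ref{fibr} then provides a dominant rational map $p\colon Y \dashrightarrow W$ whose general fibres are $F$-invariant of dimension $m$, with $\dim W = 2n - 1 - m \ge 1$.

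Next I would reduce to Picard rank one by deforming $(X,Y)$ inside $\mathrm{Def}(X)^{[Y]}$. By Corollary \ref{cordef}, a very general point of this locus corresponds to a projective IHS manifold $X'$ with smooth ample hypersurface $Y' \subset X'$ and $\mathrm{NS}(X')_{\mathbb{Q}} = \mathbb{Q}\cdot [Y']$. Since $\dim X' \ge 4$, the classical Lefschetz hyperplane theorem yields an isomorphism $H^2(X',\mathbb{Q}) \xrightarrow{\sim} H^2(Y',\mathbb{Q})$, and hence $\mathrm{NS}(Y')_{\mathbb{Q}} = \mathbb{Q}\cdot [Y'|_{Y'}]$ with generator ample and of positive top self-intersection.

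The central technical step is to transport the fibration $p$ to a very general deformation. The general fibres of $p$ form an algebraic family, corresponding to a component $T$ of the Douady (or Barlet) space of $Y$, and $T$ fits into the relative Douady space of $\mathcal{Y}/\mathrm{Def}(X)^{[Y]}$, which is analytic (indeed algebraic) over the base. Its non-emptiness over $[X]$ should propagate to a neighbourhood of $[X]$ that meets a very general $(X',Y')$ as above; specialising to such an $(X',Y')$ produces an $F'$-invariant covering family of $m$-dimensional subvarieties of $Y'$, hence a rational fibration $p'\colon Y' \dashrightarrow W'$ with $\dim W' \ge 1$. Fixing an ample divisor $H$ on $W'$, the divisor $D' := p'^{\ast}H$ is then simultaneously (i) a non-zero effective pullback from $W'$ of strictly smaller dimension, so $(D')^{\dim Y'} = 0$, and (ii) a non-zero rational multiple of the ample generator of $\mathrm{NS}(Y')_{\mathbb{Q}}$, so $(D')^{\dim Y'} > 0$. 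The contradiction closes the argument.

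The main obstacle I anticipate is precisely this transport step: although the characteristic foliation $F$ depends holomorphically on $(X,Y)$, deforming the algebraic family of closures of generic leaves to a nearby complex structure requires a Douady-space argument that controls both the dimension and the sweeping-out property of the deformed family. (As a sanity check, the case $m=1$ is cleanly ruled out by Theorem \ref{ac} together with Theorem \ref{per}, since ampleness of $Y$ forces $q(Y,Y) > 0$ and so $Y$ cannot be uniruled, but for $m \ge 2$ one really does need the deformation argument.) The reduction to Picard rank one via the local Torelli theorem combined with Lefschetz, and the final numerical contradiction, are routine once this deformation step is in place.
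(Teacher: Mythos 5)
Your overall strategy (contradiction via a fibration with $F$-invariant fibres, then a Picard-rank-one argument) is not the paper's, and it contains a genuine gap at exactly the step you flag as the ``main obstacle'': the transport of the fibration $p\colon Y\dashrightarrow W$ to a very general deformation $(X',Y')$. The component $T$ of the relative Douady/Barlet space containing the closures of the generic leaves is indeed proper over $\mathrm{Def}(X)^{[Y]}$ (Bishop compactness, since the classes are bounded), but properness only tells you that its image in the base is a \emph{closed} analytic subset containing $[X]$; it gives no open neighbourhood, and hence no reason for the image to meet the very general point. In fact the opposite is expected: for the fibres $Z$ of $p$ to deform sideways their class in $H^{2\,\mathrm{codim}Z}(X)$ must at least remain a Hodge class, which is an additional Noether--Lefschetz-type condition cutting out a proper closed subset of $\mathrm{Def}(X)^{[Y]}$ in general. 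This is the same phenomenon as a curve on a K3 surface not deforming to nearby K3 surfaces unless its class stays algebraic. Since your final numerical contradiction lives entirely on the very general $(X',Y')$, the argument does not close. (Your $m=1$ sanity check via Theorem \ref{ac} and Theorem \ref{per} is fine, but it does not help for $m\ge 2$.)

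The paper's proof needs no deformation at all in the ample case. After composing $p$ with a pencil on $W$ one may assume the $F$-invariant fibres $Z$ are hypersurfaces in $Y$, i.e.\ of codimension $2$ in $X$; by Lemma \ref{cois} such a $Z$ is coisotropic, and being coisotropic is the cohomological condition $[Z]\cup[\sigma]^{n-1}=0$. The classical LHT for the \emph{actual} ample $Y$ gives $[Z]=[D]\cup[Y]$ for a divisor class $D$ on $X$, coisotropy forces $q(D,Y)=0$, while the Fujiki formula (Lemma \ref{null}) converts $[Y]^{2n-2}\cup[Z]=[Y]^{2n-1}\cup[D]>0$ (effectivity of $Z$ plus ampleness of $Y$) into $q(D,Y)>0$ --- a contradiction. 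If you want to rescue your approach, the fix is essentially to abandon the transport step and replace the Picard-rank-one reduction by this cohomological positivity argument on $X$ itself.
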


In order to prove theorem \ref{ample} we assume the contrary. Let $Y \dasharrow B$ be a rational fibration such that its general fiber is invariant under the characteristic foliation $F$. Without loss of generality one can assume that $B$ is the projective line. We replace $Y\dashrightarrow B$ by the composition $Y\dashrightarrow B \dasharrow \pr^1$, where  $B \dasharrow \pr^1$ is a pencil of  hypersurfaces in $B$. Let $Z$ be a fiber of this rational fibration. 
\begin{dfn}
A subvariety $Z$ of codimension $k$ is called {\bf coisotropic} if the restriction of the symplectic form $\sigma$ to the tangent space to $Z$ at a general point has the smallest possible rank $n-k$ (if the rank of the restriction is smaller, then the form $\sigma$ is degenerate on $T_{X}$).
\end{dfn}
\begin{lem}\label{cois}
Let $(X,\sigma)$ be an irreducible holomorphic symplectic manifold and $Y$ a smooth hypersurface in $X$. Consider a (possibly singular) subvariety $Z$ of codimension 2 in $X$ contained in $Y$ (i.e. a hypersurface in $Y$). The following statements are equivalent \footnote{It is easy to see that the implication $2\Rightarrow 1$ is true for any codimension of $Z$.}\begin{enumerate}
    \item The variety $Z$ is invariant under the characteristic foliation;
    \item $Z$ is coisotropic  in $(X,\sigma)$ (the restriction of $\sigma$ to the smooth locus of $Z$ has the least possible rank $n-2$).
\end{enumerate}   \end{lem}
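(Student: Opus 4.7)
The plan is to argue pointwise at a generic smooth point $z\in Z$ where $Z$, $Y$ and $X$ are all smooth, working with the chain of subspaces $T_zZ\subset T_zY\subset T_zX$ of dimensions $2n-2,\ 2n-1,\ 2n$ inside the symplectic vector space $(T_zX,\sigma_z)$. For any subspace $V\subset T_zX$, let $V^{\perp}$ denote its symplectic orthogonal; by nondegeneracy one has $\dim V^{\perp}=2n-\dim V$ and $\ker(\sigma_z|_V)=V\cap V^{\perp}$. By the definition of the characteristic foliation, the line $F_z$ is exactly $(T_zY)^{\perp}$, which happens to lie inside $T_zY$ because hypersurfaces are coisotropic.

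The direction $(2)\Rightarrow(1)$ (which, as the footnote observes, works in arbitrary codimension) is then immediate: if $Z$ is coisotropic then $(T_zZ)^{\perp}\subset T_zZ$, and since $T_zZ\subset T_zY$ we have $F_z=(T_zY)^{\perp}\subset (T_zZ)^{\perp}\subset T_zZ$, so $Z$ is $F$-invariant. For the interesting direction $(1)\Rightarrow(2)$, the same inclusion $T_zZ\subset T_zY$ gives $F_z\subset (T_zZ)^{\perp}$, while the invariance hypothesis $F|_Z\subset T_Z$ gives $F_z\subset T_zZ$. Hence the line $F_z$ lies in the kernel $T_zZ\cap (T_zZ)^{\perp}=\ker(\sigma_z|_{T_zZ})$, so this kernel has dimension at least $1$. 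But $T_zZ$ has even dimension $2n-2$, so the kernel of the skew-symmetric form $\sigma_z|_{T_zZ}$ must itself have even dimension; since it is contained in $(T_zZ)^{\perp}$, which has dimension exactly $2$, the only possibility is that the kernel equals all of $(T_zZ)^{\perp}$. Thus $\sigma_z|_{T_zZ}$ has rank $2n-4$, the least possible, and $Z$ is coisotropic.

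The point requiring a bit of care is the possible singularities of $Z$: the argument must be run at a generic point of $Z^{sm}$, but this is exactly what the definition of coisotropy asks for, and the rank of the restricted form is lower semicontinuous so a generic statement suffices. It is also worth emphasising that the parity step is precisely what restricts the equivalence to codimension $2$: for $Z$ of codimension $k\geq 3$ the orthogonal $(T_zZ)^{\perp}$ would have dimension $k$, and the inclusion $F_z\subset\ker(\sigma_z|_{T_zZ})$ would only force the kernel to have some even dimension between $2$ and $k$, which in general is strictly smaller than $k$ and does not yield coisotropy.
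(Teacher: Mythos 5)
Your proof is correct and follows essentially the same pointwise linear-algebra argument as the paper: identify $F_z=(T_zY)^{\perp}$, use the inclusions $T_zZ\subset T_zY$ to compare orthogonals, and conclude via the codimension count. The only difference is that you make explicit the parity step (the kernel of $\sigma_z|_{T_zZ}$ has even dimension, hence fills the two-dimensional $(T_zZ)^{\perp}$) that the paper compresses into ``since $T_{Z,z}$ has codimension $2$, it is coisotropic,'' which is a welcome clarification rather than a different route.
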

\begin{proof}
Let $z$ be a smooth point of $Z$. Consider the vector spaces $T_{Z,z}\subset T_{Y,z}\subset T_{X,z}$. \\
$\Longrightarrow$ Since $Z$ is $F_Y$-invariant, $T_{Z,z}$ contains $T_{Y,z}^{\perp}$. The line $T_{Y,z}^{\perp}$ is orthogonal to any vector of $T_{Z,z}$. Thus $\sigma|T_{Z,z}$ is degenerate. Since $T_{Z,z}$ has codimension $2$, it is coisotropic. 
\\$ \Longleftarrow$ Since $Z$ is coisotropic, $T_{Z,z}$ contains $T_{Z,z}^{\perp}$ and hence it contains $T_{Y,z}^{\perp}=F_{Y,z}$ \footnote{As one may notice, for $Z$ of higher codimension the first implication is wrong but the second implication holds.}.
\end{proof}
Theorem \ref{ample} obviously follows from the next result. 
\begin{prop}\label{coisample}
Let $Y$ be an ample smooth hypersurface in an irreducible holomorphic symplectic manifold $X$. Then $Y$ contains no coisotropic subvariety of codimension $2$ in $X$.
\end{prop}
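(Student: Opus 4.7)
The plan is to derive a contradiction from the coisotropic condition by combining the Fujiki relation with the Lefschetz hyperplane theorem. Assume $Z\subset Y$ is a codimension-$2$ coisotropic subvariety of $X$. Since $Y$ is smooth and ample and $\dim Y=2n-1\geq 3$, the classical Lefschetz hyperplane theorem yields an isomorphism of Hodge structures $j^*\colon \h^2(X,\Q)\xrightarrow{\sim}\h^2(Y,\Q)$, where $j\colon Y\hookrightarrow X$ is the inclusion. The class $[Z]_Y\in \h^{1,1}(Y)\cap \h^2(Y,\Q)$ therefore lifts to a rational $(1,1)$-class on $X$, which by the Lefschetz theorem on $(1,1)$-classes is represented by a $\Q$-divisor $D$ on $X$ with $[D]|_Y=[Z]_Y$. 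Pushing forward via $j$ we obtain the identity $[Z]=[D]\cdot[Y]$ in $\h^4(X,\Q)$.

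Next I translate the coisotropic hypothesis into a cohomological vanishing. At a smooth point of $Z$, the form $\sigma|_{T_Z}$ has a $2$-dimensional kernel, hence is alternating of rank $2n-4$; thus $(\sigma|_Z)^{n-1}=0$ on the smooth locus and consequently on any desingularization of $Z$, giving
$$\int_X\sigma^{n-1}\wedge\bar\sigma^{n-1}\wedge[Z]=0.$$
Substituting $[Z]=[D][Y]$ and applying the polarized Fujiki relation, and using the Hodge-type orthogonalities $q(\sigma,\sigma)=q(\bar\sigma,\bar\sigma)=q(\sigma,Y)=q(\bar\sigma,Y)=q(\sigma,D)=q(\bar\sigma,D)=0$, only the pairings that match every $\sigma$ with a $\bar\sigma$ and then $D$ with $Y$ contribute to the Fujiki sum, so up to a positive combinatorial constant one obtains
$$0=c\cdot q(\sigma,\bar\sigma)^{n-1}\cdot q(D,Y).$$
Since $q(\sigma,\bar\sigma)>0$, this forces $q(D,Y)=0$.

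To reach the contradiction I compute $\int_X D\cdot Y^{2n-1}$ in two ways. Fujiki applied to this product yields, up to a nonzero constant, $q(D,Y)\cdot q(Y,Y)^{n-1}$, which vanishes by the previous step. On the other hand, since $[D]|_Y=[Z]_Y$ and $h:=Y|_Y$ is ample on $Y$, the projection formula gives
$$\int_X D\cdot Y^{2n-1}=\int_Y [Z]_Y\cdot h^{2n-2}=\int_Z h^{2n-2}|_Z>0,$$
because $Z$ is a nonempty effective subvariety of dimension $2n-2$ and $h|_Z$ is ample. This contradicts the vanishing obtained via Fujiki and proves the proposition. The main technical point is the lifting of $[Z]_Y$ to a divisor class on $X$, for which the Lefschetz hyperplane theorem is essential; the remainder is bookkeeping of the Fujiki relation, in which the Hodge orthogonalities involving $\sigma$ are what isolate the factor $q(D,Y)$. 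The same blueprint will apply to the nef-and-big setting once Theorem \ref{lht} is in hand.
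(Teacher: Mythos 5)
Your proof is correct and follows essentially the same route as the paper: both use the Lefschetz hyperplane theorem to write $[Z]=[D]\cdot[Y]$, the (polarized) Fujiki relation to show that the coisotropic condition forces $q(D,Y)=0$ and hence $D\cdot Y^{2n-1}=0$, and the effectivity of $Z$ together with ampleness of $Y$ to get $D\cdot Y^{2n-1}=Z\cdot Y^{2n-2}>0$, a contradiction. The paper merely arranges the same ingredients contrapositively (effectivity gives $q(D,Y)>0$, hence $Z$ is not coisotropic), so there is no substantive difference.
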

In the rest of this section, we prove proposition \ref{coisample}. First we remark that being coisotropic is a cohomological property. In other words, a subvariety $Z$ of codimension $k$ (possibly singular) is coisotropic if and only if $[Z]\cup [\sigma^{n-k+1}]=0 \in \h^{2n+2}(X,\Co)$ (see \cite[lemma~1.4]{Vo3} for the details). Now we use the ampleness of $Y$ to apply the Lefschetz hyperplane theorem (LHT). It yields that there is a (not necessarily effective) divisor $D$, such that $[Z]=[D]\cdot [Y]$. 
\begin{lem}
Let $\alpha,\beta \in \ns(X)$. The class $\alpha \cup \beta$ is coisotropic if and only if $q(\alpha,\beta)=0$.
\end{lem}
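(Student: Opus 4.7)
The plan is to reduce the coisotropy condition to a single top-degree integral by Hodge-theoretic means, and then evaluate that integral using the polarized Beauville--Bogomolov--Fujiki relation.

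First, I would translate the coisotropy condition. By the cohomological criterion recalled just above the lemma, the class $\alpha\cup\beta \in \h^4(X,\Co)$ is coisotropic (as a codimension-two class) precisely when $\alpha\beta\sigma^{n-1}=0$ in $\h^{2n+2}(X,\Co)$. Since $\alpha,\beta\in\ns(X)$ are of pure Hodge type $(1,1)$ and $\sigma$ is of type $(2,0)$, the product $\alpha\beta\sigma^{n-1}$ lies in the pure Hodge component $\h^{2n,2}(X)$, and vanishing in $\h^{2n+2}$ is equivalent to vanishing in this component.

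Next, I would use that for an IHS manifold of dimension $2n$ one has $h^{2k,0}=1$ with $\h^{2k,0}(X)=\Co\cdot\sigma^{k}$ and $h^{2k+1,0}=0$. By complex conjugation $\h^{0,2n-2}(X)=\Co\cdot\bar\sigma^{n-1}$ is one-dimensional. Because the Poincaré pairing restricts to a perfect pairing $\h^{2n,2}(X)\times\h^{0,2n-2}(X)\to\Co$ (standard Hodge theory for compact Kähler manifolds), the vanishing of $\alpha\beta\sigma^{n-1}$ in $\h^{2n,2}(X)$ is equivalent to
\[
\int_X \alpha\,\beta\,\sigma^{n-1}\,\bar\sigma^{n-1} \;=\; 0.
\]

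Finally, I would evaluate this integral by polarizing the Beauville--Bogomolov--Fujiki relation $\int_X \gamma^{2n}=c_X\,q(\gamma)^n$. This gives a formula of the form
\[
\int_X \alpha_1\cdots\alpha_{2n} \;=\; C_n \sum_{M}\prod_{\{i,j\}\in M} q(\alpha_i,\alpha_j),
\]
where the sum runs over perfect matchings $M$ of the $2n$ classes and $C_n\neq 0$. Applying this with the multiset $\{\alpha,\beta,\sigma,\dots,\sigma,\bar\sigma,\dots,\bar\sigma\}$ and using the standard facts that $q$ respects the Hodge decomposition---namely $q(\sigma,\sigma)=q(\bar\sigma,\bar\sigma)=0$, $q(\alpha,\sigma)=q(\alpha,\bar\sigma)=q(\beta,\sigma)=q(\beta,\bar\sigma)=0$, and $q(\sigma,\bar\sigma)>0$---only the matchings that pair $\alpha$ with $\beta$ and pair each $\sigma$ with some $\bar\sigma$ survive. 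There are exactly $(n-1)!$ such matchings, so the integral equals $C_n\,(n-1)!\,q(\alpha,\beta)\,q(\sigma,\bar\sigma)^{n-1}$. Since $q(\sigma,\bar\sigma)^{n-1}\neq 0$, this vanishes if and only if $q(\alpha,\beta)=0$, proving the lemma.

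The only real subtlety is the step going from vanishing of the top-degree integral back to vanishing of the class $\alpha\beta\sigma^{n-1}$ itself; this rests on the fact that $\h^{0,2n-2}(X)$ is one-dimensional, which is a structural feature specific to IHS manifolds. Everything else is a bookkeeping exercise in the polarized Fujiki formula.
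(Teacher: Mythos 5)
Your proof is correct and follows essentially the same route as the paper: both reduce coisotropy of $\alpha\cup\beta$ to the vanishing of the top intersection $\alpha\cup\beta\cup\sigma^{n-1}\cup\bar\sigma^{n-1}$ and identify that number with a nonzero multiple of $q(\alpha,\beta)$. The only difference is that you carry out both implications --- supplying the perfect-pairing argument on $\h^{2n,2}(X)\times\h^{0,2n-2}(X)$ for the converse and pinning down the proportionality constant via the polarized Fujiki relation --- whereas the paper's proof writes out only the forward direction (which is the one it actually uses later).
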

\begin{proof}
If $Z$ is coisotropic $[Z]\cup [\sigma^{n-1}]=0$ and hence $[Z]\cup [\sigma^{n-1}]\cup[\bar{\sigma}^{n-1}]=\alpha \cup \beta \cup [\sigma]^{n-1}\cup [\bar{\sigma}]^{n-1}=0$.  Since $\alpha$ and $\beta$ of type $(1,1)$,  $q(\alpha, \beta)$ is proportional to $\alpha \cup \beta \cup [\sigma]^{n-1}\cup [\bar{\sigma}]^{n-1}$. Hence, if $Z$ is coisotropic, then $q(\alpha, \beta)=0$.
\end{proof} So, $[D]$ is $BBF$--orthogonal to $[Y]$. We show that it is impossible. 
\begin{lem}\label{null}
Let $\alpha, \beta \in \ns(X)$ and $q(\beta,\beta)>0$. Then the signs of $q(\alpha, \beta)q(\beta,\beta)^{n-1}$ and of $\alpha \beta^{2n-1}$ are the same.
\end{lem}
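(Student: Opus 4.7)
The plan is to reduce the lemma to a one-step polarization of the Fujiki relation. Recall that on an irreducible holomorphic symplectic manifold of dimension $2n$ there is a positive Fujiki constant $c_X>0$ such that
\[
\gamma^{2n} = c_X \, q(\gamma,\gamma)^n
\]
for every $\gamma \in \h^2(X,\R)$. The positivity of $c_X$ is standard: evaluate the formula on a K\"ahler class $\omega$, for which $\omega^{2n}>0$ is a volume and $q(\omega,\omega)>0$ since the K\"ahler cone lies inside the positive cone of the BBF form. I would then substitute $\gamma = \alpha + t\beta$ for a formal variable $t$ and compare coefficients of $t^{2n-1}$.

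On the cup-product side, the binomial expansion $(\alpha+t\beta)^{2n}$ contributes $\binom{2n}{2n-1}\alpha \beta^{2n-1} = 2n\,\alpha\beta^{2n-1}$ in degree $2n-1$. On the quadratic side,
\[
q(\alpha + t\beta,\, \alpha + t\beta) = q(\alpha,\alpha) + 2t\,q(\alpha,\beta) + t^2 q(\beta,\beta),
\]
and the only way to produce a $t^{2n-1}$ term in the $n$-th power is to select the quadratic summand $t^2 q(\beta,\beta)$ from $n-1$ of the $n$ factors and the linear summand $2t\,q(\alpha,\beta)$ from the remaining one, giving $n\cdot 2\, q(\alpha,\beta)\,q(\beta,\beta)^{n-1} = 2n\,q(\alpha,\beta)\,q(\beta,\beta)^{n-1}$. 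Equating the two coefficients yields
\[
\alpha \beta^{2n-1} \;=\; c_X\, q(\alpha,\beta)\, q(\beta,\beta)^{n-1}.
\]

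Since $c_X > 0$, this identity shows at once that $\alpha\beta^{2n-1}$ and $q(\alpha,\beta)\,q(\beta,\beta)^{n-1}$ have the same sign (and vanish together), which is exactly the claim. There is essentially no obstacle here: the argument is automatic once Fujiki's formula is invoked, and the only mildly subtle ingredient, the positivity of $c_X$, is resolved immediately by testing on a K\"ahler class. The hypothesis $q(\beta,\beta)>0$ is not even used in the derivation of the identity, but it ensures that $q(\beta,\beta)^{n-1}>0$ so that the sign of the right-hand side is simply the sign of $q(\alpha,\beta)$.
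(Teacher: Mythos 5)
Your proof is correct and follows essentially the same route as the paper: polarize the Fujiki relation at $\alpha+t\beta$ and equate the coefficients of $t^{2n-1}$, giving $2n\,\alpha\beta^{2n-1}=c_X\cdot 2n\,q(\alpha,\beta)q(\beta,\beta)^{n-1}$. The only difference is cosmetic — you additionally justify the positivity of the Fujiki constant by evaluating on a K\"ahler class, which the paper simply takes as known.
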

\begin{proof}
 
 The Fujiki formula says that there is a positive constant $c$ such that for every $\gamma\in \h^2(X,\mathbb{Z})$
$$q(\gamma,\gamma)^n=c\gamma^{2n}.$$
Let $k$ be an integer. Applying the Fujiki formula for $k\beta +\alpha$ we obtain the equality of polynomials in $k$:
\begin{equation}\label{polynom}
    c(k\beta+\alpha)^{2n}=(k^2q(\beta)+2q(\alpha,\beta)k+q(\alpha))^n.
\end{equation}
This equality of polynomials gives us the equality of the coefficients of the term of degree $2n-1$: 

$$2nc(\alpha \beta^{2n-1})=2nq(\alpha,\beta)q(\beta,\beta)^{n-1}.$$ 
\end{proof}
\begin{cor}\label{cornull}
Let $Y$ be an ample divisor and $D$ a divisor such that $[Z]= [Y] \cup [D]$ is an effective divisor. Then $q(Y,D)>0$. In particular, $Z$ is not coisotropic.
\end{cor}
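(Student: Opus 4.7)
The plan is to deduce the corollary as a direct application of Lemma \ref{null}, using the ampleness of $Y$ twice: once to supply the hypothesis $q(Y,Y)>0$, and once to guarantee positivity of a certain top intersection number.

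First I would apply Lemma \ref{null} with $\alpha=D$ and $\beta=Y$. Since $Y$ is ample, $q(Y,Y)>0$, so the hypothesis of the lemma is satisfied. The conclusion gives that $q(D,Y)\,q(Y,Y)^{n-1}$ has the same sign as $D\cdot Y^{2n-1}$. Because $q(Y,Y)^{n-1}>0$, determining the sign of $q(D,Y)$ reduces to determining the sign of $D\cdot Y^{2n-1}$.

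Next I would observe the key identity
\[
D\cdot Y^{2n-1} \;=\; (D\cdot Y)\cdot Y^{2n-2} \;=\; [Z]\cdot Y^{2n-2}.
\]
Since $Z$ is a nonzero effective cycle of codimension $2$ in $X$ and $Y$ is ample, the intersection number $[Z]\cdot Y^{2n-2}$ is strictly positive (this is immediate from ampleness: $Y^{2n-2}$ restricted to the support of $Z$ yields a positive degree). Thus $q(D,Y)>0$.

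Finally, combining with the preceding lemma, which characterizes the class $[Y]\cup[D]$ as coisotropic precisely when $q(Y,D)=0$, we conclude that $Z$ is not coisotropic. No step presents a real obstacle here; the whole argument is a one-line assembly of the Fujiki-type relation in Lemma \ref{null} with the positivity $[Z]\cdot Y^{2n-2}>0$ coming from the ampleness of $Y$.
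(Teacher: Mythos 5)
Your proof is correct and is essentially the same as the paper's: the paper also takes $\alpha=[D]$, $\beta=[Y]$ in Lemma \ref{null} and concludes $q(D,Y)>0$ from the positivity $[Y]^{2n-1}\cup[D]=[Y]^{2n-2}\cup[Z]>0$, which holds because $Z$ is a nonzero effective codimension-two cycle and $Y$ is ample. No substantive difference.
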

\begin{proof}

Take $\alpha=[D]$ and $\beta=[Y]$. Since $Z$ is effective and $Y$ is ample we have $$[Y]^{2n-1}\cup [D]=[Y]^{2n-2}\cup [Z]>0$$ and hence $q(D,Y)>0$. 



\end{proof}
In the end of this section we make a conjecture generalizing proposition \ref{coisample}.
\begin{conj} Let $X$ be an IHS manifold of dimension $2n$ and $Y$ a smooth ample hypersurface in $X$. Then the hypersurface $Y$ contains no coisotropic subvariety except itself.
\end{conj}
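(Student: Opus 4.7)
Suppose $Z \subsetneq Y$ is a coisotropic subvariety of codimension $k \geq 2$ in $X$. A coisotropic subvariety has codimension at most $n$, so $2 \leq k \leq n$. My plan is to extend the cohomological argument of Proposition \ref{coisample}. Since $Y$ is smooth and ample, the classical Lefschetz hyperplane theorem yields an isomorphism $\h^{2k-2}(X,\Q) \xrightarrow{\sim} \h^{2k-2}(Y,\Q)$ whenever $2k-2 < \dim Y = 2n-1$, i.e.\ whenever $k \leq n$. Let $\alpha \in \h^{2k-2}(X,\Q)$ be the class whose restriction to $Y$ is the class of $Z$ viewed as a codimension-$(k-1)$ subvariety of $Y$. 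By the projection formula $[Z]_X = \alpha \cup [Y]$ in $\h^{2k}(X,\Q)$.

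The cohomological characterization of coisotropy gives $[Z]_X \cdot [\sigma]^{n-k+1} = 0$ in $\h^{2n+2}(X,\Co)$, hence $\alpha \cdot [Y] \cdot [\sigma]^{n-k+1} = 0$; multiplying by $[\bar\sigma]^{n-k+1} \cdot [Y]^{k-2}$ one obtains the identity $\alpha \cdot [Y]^{k-1} \cdot ([\sigma][\bar\sigma])^{n-k+1} = 0$ in $\h^{4n}(X,\Co) = \Co$. On the other hand, ampleness of $Y$ together with the effectivity of $Z$ yields $[Y]^{2n-k} \cdot [Z] > 0$, and hence $\alpha \cdot [Y]^{2n-k+1} > 0$. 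These two relations would produce the desired contradiction as soon as one has a higher-degree analogue of Lemma \ref{null}: for every class $\alpha \in \h^{2k-2}(X,\Q)$ of Hodge type $(k-1,k-1)$ and every $\beta \in \h^2(X)$ with $q(\beta,\beta) > 0$, the signs of $\alpha \cdot \beta^{2n-k+1}$ and of $\alpha \cdot \beta^{k-1} \cdot ([\sigma][\bar\sigma])^{n-k+1}$ should agree up to a universal positive constant depending only on $\beta$, $k$ and $n$.

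\textbf{The main obstacle.} For $k = 2$ and $\alpha$ a divisor this comparison coincides with the Fujiki identity used in Lemma \ref{null}. For $k \geq 3$ the class $\alpha$ may lie outside the Verbitsky subalgebra of $\h^*(X)$ generated by $\h^2(X)$, on which alone the Fujiki relation is available. A plausible approach is to use the $\mathrm{SO}(\h^2(X,\R),q)$-action of Verbitsky--Looijenga--Lunts on the full cohomology ring and reduce the pairings to primitive components, but controlling the non-Verbitsky part of $\alpha$ (which may be non-trivial in general) seems to require a genuinely new input. I believe this is the reason the statement is formulated only as a conjecture.
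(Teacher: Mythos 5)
This statement is posed in the paper only as a conjecture: the paper offers no proof, and immediately after stating it the author explains exactly why the codimension-$2$ argument does not extend. Your proposal is therefore not in competition with a proof in the paper, and, to your credit, you do not claim to have one. Your reduction is the same as the paper's: by the Lefschetz hyperplane theorem write $[Z]=\alpha\cup[Y]$ with $\alpha\in \h^{2k-2}(X,\Q)$, use the cohomological characterization of coisotropy to get $\alpha\cup[Y]\cup[\sigma]^{n-k+1}=0$, and try to play this against the positivity $\alpha\cup[Y]^{2n-k+1}>0$ coming from ampleness and effectivity. These steps are dimensionally and logically sound (and the bound $k\le n$ for a coisotropic subvariety keeps you inside the Lefschetz range).

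The genuine gap is the one you name yourself, and it coincides precisely with the paper's stated obstruction: for $k\ge 3$ the class $\alpha$ need not lie in the image of $S^{k-1}\h^2(X,\Q)\to \h^{2k-2}(X,\Q)$ (the Verbitsky subalgebra), and the Fujiki relation --- which is what converts intersection numbers into BBF-form inequalities in Lemma \ref{null} --- is only available on that subalgebra. Without control of the non-polynomial part of $\alpha$, the sign comparison between $\alpha\cdot\beta^{2n-k+1}$ and $\alpha\cdot\beta^{k-1}\cdot([\sigma][\bar\sigma])^{n-k+1}$ that your argument hinges on is not established; the Verbitsky--Looijenga--Lunts $\mathfrak{so}$-action you mention is the natural tool to attempt it, but as far as is known it does not by itself pin down the signs of pairings involving classes outside the $\h^2$-generated subring. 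So your proposal correctly reproduces the known reduction and correctly locates the missing ingredient, but it does not close the conjecture --- which is consistent with the paper, where Proposition \ref{coisample} is proved only for codimension $2$ and the general statement is left open.
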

If we apply the arguments we used to prove proposition \ref{coisample}, we can decompose the class of a coisotropic subvariety $Z$ as $[Z]=[Y]\cup\alpha$, where $\alpha \in \h^{2\dim Z-2}(X,\mathbb{Z})$. For $codim Z>2$ we meet the problem that the class $\alpha$ is not necessarily polynomial (i.e. is not contained in the image of the morphism $S^{\dim Z -1}\h^2(X,\mathbb{Z})\to \h^{2 \dim Z -2}(X,\mathbb{Z})$). And we can not use the estimations with BBF form to this class $\alpha$.
\section{The LHT for a nef and big hypersurface in IHS}\label{seclht}
In the previous sections we used the LHT for an ample hypersurface. In order to adapt this proof for a nef and big hypersurface we show that the LHT holds for such hypersurfaces. The aim of this section is to prove theorem  \ref{lht}.

First we recall that the LHT (with rational coefficients) follows from the Kodaira–Akizuki–Nakano vanishing. 

\begin{lem}\label{LHTKAN}Let $X$ be a smooth variety and $\mathcal{L}$ be an effective line bundle on $X$. Consider a smooth hypersurface $Y\subset X$ such that $\ob_X(Y)\cong \mathcal{L}$. Assume that Kodaira-Akizuki-Nakano vanishing holds for $\mathcal{L}$. This means the following: $$\h^q(X, \Omega^p_X\otimes \mathcal{L}^*)=0\; \textrm{for}\; \; p+q<\dim X.$$ Then  $\h^i(X,\mathbb{Q} )\cong \h^i(Y,\mathbb{Q})$ for $i<\dim X$.
\end{lem}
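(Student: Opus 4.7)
The plan is to prove the Lefschetz hyperplane theorem by passing to the Hodge decomposition and then establishing the statement on each Hodge graded piece.

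First, by the Hodge decomposition
$$H^i(X,\mathbb{C})=\bigoplus_{p+q=i}H^q(X,\Omega^p_X),$$
which is compatible with both the $\mathbb{Q}$-structure and the restriction morphism to $Y$, it suffices to show that restriction induces an isomorphism $H^q(X,\Omega^p_X)\to H^q(Y,\Omega^p_Y)$ for every $p,q$ with $p+q<\dim X=n$. The rational statement then follows by comparing dimensions on each Hodge summand.

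Second, I would factor this restriction through the intermediate group $H^q(Y,\Omega^p_X|_Y)$ via two short exact sequences. The first comes from tensoring the ideal sheaf sequence $0\to \mathcal{L}^{-1}\to \mathcal{O}_X\to \mathcal{O}_Y\to 0$ with the locally free sheaf $\Omega^p_X$:
$$0\to \Omega^p_X\otimes\mathcal{L}^{-1}\to \Omega^p_X\to \Omega^p_X|_Y\to 0.$$
The associated long exact sequence, combined with the KAN hypothesis $H^j(X,\Omega^p_X\otimes\mathcal{L}^{-1})=0$ for $p+j<n$, shows that the restriction $H^q(X,\Omega^p_X)\to H^q(Y,\Omega^p_X|_Y)$ is an isomorphism for $p+q\le n-2$ and injective for $p+q=n-1$. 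The second is the exterior-power filtration attached to the conormal sequence:
$$0\to \Omega^{p-1}_Y\otimes\mathcal{L}^{-1}|_Y\to \Omega^p_X|_Y\to \Omega^p_Y\to 0.$$
The desired map $H^q(Y,\Omega^p_X|_Y)\to H^q(Y,\Omega^p_Y)$ becomes an isomorphism provided $H^q(Y,\Omega^{p-1}_Y\otimes\mathcal{L}^{-1}|_Y)=0$ in the appropriate range.

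The main obstacle is this last vanishing: it is a KAN-type statement for $\mathcal{L}|_Y$ on $Y$, which does not formally follow from the hypothesis on $X$. I plan to handle it by cascading: tensor the first sequence above by $\mathcal{L}^{-1}$ to compare $H^*(X,\Omega^{p-1}_X\otimes\mathcal{L}^{-1})$ with $H^*(Y,\Omega^{p-1}_X|_Y\otimes\mathcal{L}^{-1}|_Y)$, then use the conormal filtration twisted by $\mathcal{L}^{-1}|_Y$ to descend to $H^*(Y,\Omega^{p-1}_Y\otimes\mathcal{L}^{-1}|_Y)$, inducting on $p$. This will force the consideration of extra terms $H^*(X,\Omega^*_X\otimes\mathcal{L}^{-k})$ for $k\ge 2$, but these are automatically controlled in the intended application, where $\mathcal{L}$ is nef and big, by Kawamata--Viehweg vanishing. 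A cleaner alternative is to replace the first sequence by its logarithmic residue analogue $0\to \Omega^p_X\to \Omega^p_X(\log Y)\to \Omega^{p-1}_Y\to 0$ and invoke a Norimatsu-type vanishing derived from KAN, which bundles the two steps into one and circumvents the induction.
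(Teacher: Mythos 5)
Your outline is exactly the standard argument that the paper's one-line citation to Voisin (Chapter 13.3) points to: reduce to the Hodge graded pieces (legitimate, since restriction is a morphism of Hodge structures), then chain the twisted ideal-sheaf sequence $0\to \Omega^p_X\otimes\mathcal{L}^{-1}\to \Omega^p_X\to \Omega^p_X|_Y\to 0$ with the exterior powers of the conormal sequence. So in substance you are reconstructing the proof the paper delegates to the reference, and you have correctly located the one step that is not formal: the vanishing $H^q(Y,\Omega^{p-1}_Y\otimes\mathcal{L}^{-1}|_Y)=0$ is a KAN statement on $Y$, which the lemma's hypothesis (KAN for $\mathcal{L}$ on $X$ only) does not supply. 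In the classical setting this is a non-issue because $\mathcal{L}|_Y$ is again ample; as the lemma is literally stated, the hypothesis is too weak for this proof, and the clean fix is to assume KAN either for $\mathcal{L}|_Y$ on $Y$ or for all positive powers $\mathcal{L}^{-k}$ on $X$, which closes your cascading induction (each descent step trades $p\mapsto p-1$, $q\mapsto q+1$, $k\mapsto k+1$, keeping $p+q$ fixed, so it terminates at $\Omega^0_Y$).

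Two corrections to the repair you propose. First, the extra terms $H^q(X,\Omega^p_X\otimes\mathcal{L}^{-k})$ for $k\ge 2$ are \emph{not} controlled by Kawamata--Viehweg, which only addresses $p\in\{0,\dim X\}$; what actually works in the paper's application is that a power of a lef line bundle is lef (same morphism), so the Esnault--Viehweg vanishing (Theorem \ref{esn}) applies to every $\mathcal{L}^{k}$, and one still needs the $Y$-side vanishing, e.g.\ via the fact that a generic hyperplane restriction of a semismall map is semismall. Second, your argument (correctly) yields an isomorphism only for $p+q\le \dim X-2$ and mere injectivity at $p+q=\dim X-1$; the lemma as printed claims an isomorphism for all $i<\dim X$, which is false in that top degree already for a curve in a surface. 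This off-by-one is in the paper's statement, not in your proof, and is harmless for the application (only $H^2$ is used), but you should state the conclusion in the corrected form rather than as written.
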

\begin{proof}
See \cite[Chapter~13.3]{Vo}.
\end{proof}
Thus, to prove the LHT it is enough to prove the Kodaira–Akizuki–Nakano vanishing. Let us formulate it.
\begin{prop}
Let $\mathcal{L}$ be a nef and big line bundle on an irreducible holomorphic symplectic manifold $X$. Then the Kodaira–Akizuki–Nakano vanishing (we will write it as the KAN for shortness) holds for $\mathcal{L}$:
$$ \h^q(X, \Omega^p_X\otimes \mathcal{L}^*)=0\; \textrm{for}\; \; p+q<\dim X.$$
\end{prop}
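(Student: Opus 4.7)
The strategy is to reduce to the case of ample $\mathcal{L}$ via the deformation theory developed in Section~\ref{torellisec}. Since $\mathcal{L}$ is big, the Fujiki relation $\mathcal{L}^{2n}=c_X\,q(\mathcal{L},\mathcal{L})^n$ together with $\mathcal{L}^{2n}>0$ forces $q(\mathcal{L},\mathcal{L})>0$. Corollary~\ref{cordef} applied to $\alpha=c_1(\mathcal{L})$ then produces a smooth family $\pi:\mathcal{X}\to T$ of IHS manifolds with a relative line bundle $\mathcal{L}_{\mathcal{X}}$ such that $(X,\mathcal{L})=(X_{t_0},\mathcal{L}_{t_0})$ and $\mathcal{L}_t$ is ample on a very general fiber $X_{t_1}$.

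On such an ample fiber, classical Kodaira--Akizuki--Nakano gives $H^q(X_{t_1},\Omega^p_{X_{t_1}}\otimes\mathcal{L}_{t_1}^{-1})=0$ for $p+q<2n$. It then remains to transfer this vanishing back to $X=X_{t_0}$. The Euler characteristics $\chi(X_t,\Omega^p_{X_t}\otimes\mathcal{L}_t^{-1})$ are constant along $T$ by Hirzebruch--Riemann--Roch, and the dimensions $h^{p,q}(t):=\dim H^q(X_t,\Omega^p_{X_t}\otimes\mathcal{L}_t^{-1})$ are upper semicontinuous. To close the gap, I would exploit the specific IHS structure of the fibers: the triviality of $K_{X_t}$, which combined with the symplectic isomorphism $\Omega^p_{X_t}\cong \Omega^{2n-p}_{X_t}$ yields via Serre duality the identity
\[
h^{p,q}(t)=\dim H^{2n-q}(X_t,\Omega^p_{X_t}\otimes\mathcal{L}_t).
\]
On an ample fiber, the right-hand side vanishes in the complementary range by Akizuki--Nakano applied to $\mathcal{L}_t$, and combining this with constancy of $\chi$ on both the $\mathcal{L}_t$-- and $\mathcal{L}_t^{-1}$--sides should force $h^{p,q}(t)$ to be constant on all of $T$.

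\textbf{Main obstacle.} The technical heart of the argument is precisely this deformation invariance of twisted Hodge numbers. Upper semicontinuity points in the wrong direction for a direct transfer from the dense ample locus to the special fiber $X_{t_0}$, and the symplectic/Serre symmetry merely reflects the problem from one cohomological range to another without pinning down each $h^{p,q}$ individually. One therefore needs a careful bookkeeping in which the two symmetries and the Euler--characteristic identity are combined so as to close the system and force each $h^{p,q}(t)$ to be constant. Making this rigorous, in particular checking that the symplectic isomorphism $\Omega^p_{\mathcal{X}/T}\cong \Omega^{2n-p}_{\mathcal{X}/T}$ extends to the relative setting and behaves well with respect to the base change governing $R^q\pi_*$, is the delicate step. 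An alternative route, which I would expect to be technically heavier, would bypass deformation and establish a direct Kawamata--Viehweg--type vanishing for twisted differentials on IHS manifolds, using a Koszul-type resolution built from the symplectic form.
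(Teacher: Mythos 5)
There is a genuine gap, and it sits exactly where you locate your ``main obstacle'': the transfer of the vanishing from the very general ample fibers back to the special fiber $X_{t_0}$ does not work, and the bookkeeping you hope will close the system in fact collapses to a single constraint. Upper semicontinuity of $h^{p,q}(t)=\dim H^q(X_t,\Omega^p_{X_t}\otimes\mathcal{L}_t^{-1})$ gives \emph{lower} bounds at $t_0$ in terms of the generic values, which is the wrong direction; several $h^{p,q}$ in the same alternating sum can jump simultaneously at $t_0$ while $\chi$ stays constant, so constancy of the Euler characteristic bounds nothing individually. Worse, the two Euler characteristics you want to play against each other are not independent: the very symmetry you invoke, $h^{p,q}(\mathcal{L}^{-1})=\dim H^{2n-q}(X,\Omega^p_X\otimes\mathcal{L})$ (Serre duality plus $\Omega^p\cong\Omega^{2n-p}$ and $K_X\cong\ob_X$), gives $\chi(\Omega^p_X\otimes\mathcal{L}^{-1})=\sum_q(-1)^qh^{p,2n-q}(\mathcal{L})=(-1)^{2n}\chi(\Omega^p_X\otimes\mathcal{L})$, so the ``$\mathcal{L}$-side'' and ``$\mathcal{L}^{-1}$-side'' identities are one and the same relation. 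The symmetry pairs up unknowns without pinning any of them down, and no combination of these ingredients rules out a jump of, say, $h^{p,0}$ and $h^{p,1}$ by the same amount at $t_0$. There is no general deformation-invariance statement for twisted Hodge numbers to appeal to, so the argument cannot be completed along these lines.

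The paper's proof uses the deformation in a more limited and robust way: only a \emph{topological} consequence of ampleness is transported, namely hard Lefschetz for cup product with $[Y]$ (Corollary \ref{hard}), which is a statement about the cohomology ring and hence genuinely locally constant in the family. From hard Lefschetz one gets $[Z]\cup[Y]^{2n-2r}\neq 0$ for every subvariety $Z$ of codimension $r$, which shows that the morphism defined by $|kY|$ (base-point-free by Kawamata--Shokurov) is semismall, i.e.\ $\mathcal{L}$ is lef. The vanishing is then obtained directly on $X$ itself from the Esnault--Viehweg theorem for lef line bundles (Theorem \ref{esn}), with no need to deform the coherent cohomology at all. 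If you want to salvage your approach, you would have to replace the semicontinuity step by an honest vanishing theorem valid on $X$ itself; that is precisely what the semismall/lef detour accomplishes.
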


Remark that the KAN and the LHT are generally false for a nef and big line bundle. 
\begin{rmk}
In \cite[Remark~4.3.3]{Laz} R. Lazarsfeld gives an example, where the KAN (or the LHT for a generic section) is not true for a nef and big line bundle. This line bundle is a pull-back of $\ob_{\pr^3}(1)$ on the projective space $\pr^3$ to the blowing-up $\Bl_P \pr^3$ of this space at a point $P\in \pr^3$.
\end{rmk}
The clue in the holomorphic symplectic case is that the morphism induced by a nef and big line bundle is of a special kind. Namely, these morphisms are semismall. Let us recall the definition of a {\it lef} line bundle from \cite{semilef}.

\begin{dfn} A line bundle $\mathcal{L}$ is called lef if some its power $\mathcal{L}^{\otimes k}$ is generated by global section and induces a semismall morphism to projective space.
\end{dfn}
E. Esnault and E. Viehweg proved the KAN vanishing for lef line bundles.
\begin{thm}{\cite[Theorem~2.4]{EV}}\label{esn}
Let $L$ be a lef line bundle on a smooth variety $X$. Then $$ \h^q(X, \Omega^p_X\otimes \mathcal{L}^*)=0\; \textrm{for}\; \; p+q<\dim X.$$
\end{thm}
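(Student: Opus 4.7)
The plan is to prove the stated KAN vanishing by reducing it to the Esnault--Viehweg theorem (Theorem~\ref{esn}). That theorem gives exactly the conclusion we want, provided $\mathcal{L}$ is \emph{lef}; so the whole task becomes to check that some positive power of $\mathcal{L}$ is globally generated and defines a semismall morphism to projective space.

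First I would produce such a morphism. Since $K_X = 0$ on an IHS manifold, for every integer $a > 0$ the divisor $a\mathcal{L} - K_X = a\mathcal{L}$ is nef and big, so the Kawamata--Shokurov base-point-free theorem applies directly and $\mathcal{L}^{\otimes k}$ is globally generated for all sufficiently large $k$. The resulting morphism $\phi \colon X \to \pr^N$ is generically finite onto its image $Z$ (by bigness of $\mathcal{L}$), and taking a Stein factorization yields $X \xrightarrow{g} Y \xrightarrow{h} Z$ with $g$ a projective birational morphism and $h$ finite.

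The core of the proof is that $g$ is semismall. This is where the holomorphic symplectic structure enters decisively: by a theorem of Wierzba (see also works of Kaledin and Namikawa), any projective birational morphism from a holomorphic symplectic manifold is automatically semismall. The underlying reason is that every irreducible component $E$ of a fiber of $g$ is isotropic for the symplectic form $\sigma$, forcing $\dim E \leq n$, and the analogous isotropicity argument applied to families of fibers over subvarieties of $Y$ yields the full inequality $2 \dim W \leq \dim X + \dim g(W)$ for every subvariety $W \subset X$. Since $h$ is finite it preserves dimensions of images of subvarieties, so $\phi = h \circ g$ is also semismall; thus $\mathcal{L}^{\otimes k}$ is lef, $\mathcal{L}$ itself is lef, and Theorem~\ref{esn} delivers the required vanishing $\h^q(X, \Omega^p_X \otimes \mathcal{L}^*) = 0$ for $p + q < \dim X$.

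I expect the semismallness step to be the only genuine obstacle. The other ingredients are either immediate citations (base-point-free theorem, Esnault--Viehweg) or elementary dimension counts (composition with a finite map preserves semismallness). Verifying semismallness of $g$ requires invoking the contraction theory for holomorphic symplectic manifolds, and the heart of the matter is that $\sigma$ vanishes on every fiber component of $g$, whence the dimensional bound propagates from individual fibers to arbitrary subvarieties by a relative version of the same argument.
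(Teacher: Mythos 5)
Your proposal does not prove the statement at hand --- it assumes it. The statement you were asked to prove \emph{is} the Esnault--Viehweg theorem (Theorem~\ref{esn}): for an arbitrary smooth variety $X$ and a lef line bundle $\mathcal{L}$, one has $\h^q(X,\Omega^p_X\otimes\mathcal{L}^*)=0$ for $p+q<\dim X$. Your first sentence announces a reduction ``to the Esnault--Viehweg theorem,'' i.e.\ you invoke the very result to be established, which is circular. Everything else in your argument (triviality of $K_X$, the Kawamata--Shokurov base-point-free theorem, Stein factorization, the Wierzba--Kaledin--Namikawa semismallness of birational contractions of holomorphic symplectic manifolds) is machinery specific to irreducible holomorphic symplectic manifolds, whereas the statement has no symplectic hypothesis at all: $X$ is any smooth variety, and no symplectic form is available to make fiber components isotropic. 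In the paper this theorem is an external input, cited from \cite[Theorem~2.4]{EV}; a self-contained proof would have to reproduce the actual Esnault--Viehweg argument (or, alternatively, deduce the vanishing from the Hodge theory of semismall maps as in de Cataldo--Migliorini), neither of which your proposal touches.

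What you have actually written is a proof of a \emph{different}, downstream statement in the paper: the Proposition of section \ref{seclht} that a nef and big line bundle on an IHS manifold is lef, hence satisfies KAN. Read as such, your argument is essentially sound and is a genuine alternative to the paper's route: the paper proves semismallness of $\phi\colon X\to\pr^N$ cohomologically, using hard Lefschetz for the class $[Y]$ with $q(Y,Y)>0$ (Corollary \ref{hard}, obtained by deforming $(X,[Y])$ to a pair where $[Y]$ is ample) to get $[Z]\cup[Y]^{2n-2r}\neq 0$ and hence $\dim\phi(Z)\geq 2\dim Z-\dim X$; you instead factor $\phi$ through a birational contraction and quote the geometric theorem that such contractions of symplectic manifolds are semismall. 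But this does not repair the logical defect: as a proof of Theorem~\ref{esn} as stated, the proposal is circular and the gap is fatal.
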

We are going to prove that a nef and big line bundle $\mathcal{L}$ on a holomorphic symplectic manifold $X$ is lef. By the Kawamata-Shokurov base-point-free theorem (see \cite[Chapter~10]{CKM}) the line bundle $\mathcal{L}^{\otimes k}$ for $k>>0$ is generated by global sections.\\
\begin{lem}The morphism $\phi: X \xrightarrow{|kY|} \pr^N$ is semismall.
\end{lem}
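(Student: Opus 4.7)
The plan is to deduce semismallness of $\phi$ from the well-known fact that every projective birational contraction of a projective irreducible holomorphic symplectic manifold is semismall (a theorem of Wierzba in dimension four and Kaledin in general).

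First I would observe that $\phi\colon X \to \pr^N$ factors through its image $X' := \phi(X)$. Since $\mathcal{L} := \ob_X(Y)$ is nef and big, the Kawamata--Shokurov base-point-free theorem already cited above gives global generation of $\mathcal{L}^{\otimes k}$ for $k \gg 0$, while bigness ensures $(\mathcal{L}^{\otimes k})^{2n} > 0$ and hence that $\phi$ is birational onto $X'$. Composing with Stein factorization replaces $X'$ by its normalization $X''$ without changing fibre dimensions, and yields a projective birational morphism $X \to X''$ from the smooth projective IHS manifold $X$ to a normal projective variety.

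The central step is to apply the Wierzba--Kaledin semismallness theorem to this birational contraction. It asserts that every irreducible component $F$ of every fibre of $X \to X''$ is isotropic with respect to the symplectic form $\sigma$, so that $\dim F \leq \tfrac{1}{2}\operatorname{codim}(\phi(F))$. Unwinding the equivalence between such fibre-dimension bounds and the formulation adopted in \ref{semismall} gives $2\dim Z \leq \dim X + \dim \phi(Z)$ for every subvariety $Z \subset X$. Since the finite map $X'' \to X'$ and the closed embedding $X' \hookrightarrow \pr^N$ do not alter image dimensions, the composite morphism $\phi\colon X \to \pr^N$ is itself semismall.

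The only real obstacle is the Wierzba--Kaledin theorem itself, which rests on a careful analysis of the symplectic form along the exceptional locus and on the resulting Poisson structure on $X''$; once it is available, the argument above is a formal unwinding of the definitions.
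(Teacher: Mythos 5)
Your proof is essentially correct, but it takes a genuinely different route from the paper. The paper's argument is purely cohomological and self-contained: for a subvariety $Z$ of codimension $r$, the hard Lefschetz theorem for the class $[Y]$ (Corollary \ref{hard}, which the paper has already established for any divisor with $q(Y,Y)>0$ by deforming $(X,[Y])$ to a pair where the class becomes ample) gives $[Z]\cup[Y]^{2n-2r}\neq 0$, hence $\phi(Z)$ meets a general linear subspace of codimension $2n-2r$ in $\pr^N$, so $\dim\phi(Z)\geq 2n-2r=2\dim Z-\dim X$. You instead factor $\phi$ through its Stein factorization and invoke the Wierzba--Kaledin semismallness theorem for projective birational contractions of holomorphic symplectic manifolds. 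That works: bigness plus the Fujiki relation make $\phi$ birational onto its image, normality of the Stein factor and Hartogs show the image has symplectic singularities, and the finite part and the embedding into $\pr^N$ preserve image dimensions. What your route buys is independence from the hard Lefschetz input (and hence from the hypothesis $q(Y,Y)>0$ and the deformation argument of Section \ref{torellisec}); what it costs is reliance on a substantially deeper external theorem where the paper needs only an intersection-number computation. Two small cautions: your displayed inequality $\dim F\leq\tfrac12\operatorname{codim}(\phi(F))$ is stated for a fibre component, for which $\phi(F)$ is a point and the bound only says $\dim F\leq n$ (isotropy of fibres); that alone is strictly weaker than semismallness in the sense of Definition \ref{semismall}, so you genuinely need the stratified form of Kaledin's statement (fibre dimension over each stratum bounded by half the codimension of the stratum), not just isotropy of individual fibres. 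Also, semismallness of the birational part does not formally follow from ``composing with a finite map preserves dimensions'' unless you check the inequality of Definition \ref{semismall} directly for $\phi$, which you do implicitly; this is fine but worth saying.
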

\begin{proof}
Let $Z$ be a closed subvariety of $X$ of codimension $r$. By corollary \ref{hard}, $[Z]\cup [Y]^{2n-2r}\neq 0 \in \h^{4n-2r}(X,\mathbb{Z})$. Hence the section of $\phi(Z)$ by a linear space of codimension $2n-2r$ in $\pr^N$ is not empty. Finally we obtain,  $$\dim \phi(Z)\geq 2n-2r=2(2n-r)-2n=2\dim Z -\dim X.$$
Thus, the line bundle $\mathcal{L}$ is lef. By theorem \ref{esn} the KAN holds for $\mathcal{L}$. By lemma \ref{LHTKAN} the LHT with rational coefficients is true for $Y$.
\end{proof}

\section{The case of a nef and big hypersurface}\label{secnef}
In this section we prove theorem \ref{tnef}. Let us recall it.\\

\noindent
\textbf{Theorem \ref{tnef}}
Let $Y$ be a smooth nef and big hypersurface in an irreducible holomorphic symplectic manifold $X$. Then a generic leaf of the characteristic foliation of $Y$ is Zariski dense in the hypersurface $Y$. \\

As it was shown in section \ref{secample} it is enough to prove  the following. 
\begin{prop}
 Let $Y$ be a smooth nef and big hypersurface in an irreducible holomorphic symplectic  manifold $X$. Then $Y$ can not be covered by a family of coisotropic subvarieties of codimension 2 in $X$.
\end{prop}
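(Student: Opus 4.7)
The plan is to mimic the proof of Proposition \ref{coisample}, substituting Theorem \ref{lht} for the classical LHT and replacing the automatic positivity of the ample case by a more delicate analysis of restrictions of nef and big line bundles. Suppose for contradiction that $Y$ is covered by a family $\{Z_t\}_{t\in T}$ of coisotropic subvarieties of codimension $2$ in $X$, and let $Z$ be a general member. By Theorem \ref{lht} the restriction $i^{*}:H^2(X,\mathbb{Q})\to H^2(Y,\mathbb{Q})$ is an isomorphism of Hodge structures, so writing $[Z]_Y\in \ns(Y)_{\mathbb{Q}}$ for the class of $Z$ viewed as a divisor in $Y$, there exists $[D]\in \ns(X)_{\mathbb{Q}}$ with $i^{*}[D]=[Z]_Y$; pushing forward via the projection formula yields $[Z]=i_{*}i^{*}[D]=[Y]\cup[D]$ in $H^4(X,\mathbb{Q})$. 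The cohomological criterion for coisotropy recalled in Section \ref{secample} then forces $q(D,Y)=0$.

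Next I would apply Lemma \ref{null} with $\alpha=[D]$ and $\beta=[Y]$. Since $Y$ is nef and big, $q(Y,Y)>0$, and the lemma identifies the sign of $q(D,Y)$ with the sign of
\begin{equation*}
D\cdot Y^{2n-1} \;=\; Z\cdot Y^{2n-2} \;=\; \int_{Z}\bigl(Y|_{Z}\bigr)^{2n-2}.
\end{equation*}
The desired contradiction with $q(D,Y)=0$ will therefore follow once I establish the strict inequality $\bigl(Y|_{Z}\bigr)^{2n-2}>0$ on the $(2n-2)$-dimensional variety $Z$.

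This last step is the main obstacle: in the ample case $Y|_Y$ is ample and the positivity is automatic, but for $Y$ merely nef and big the intersection can drop to zero along the null locus. To handle it, I would argue as follows. The line bundle $Y|_Y$ is nef on $Y$ by restriction, and big because $(Y|_Y)^{2n-1}=Y^{2n}=c\,q(Y,Y)^n>0$. By Nakamaye's theorem its null locus $\mathrm{Null}(Y|_Y)$ coincides with the augmented base locus $\mathbb{B}_{+}(Y|_Y)$, which is a proper Zariski-closed subset of $Y$. Since the family $\{Z_t\}$ covers $Y$, a general member $Z$ is not contained in $\mathrm{Null}(Y|_Y)$, so $Y|_Z$ remains nef and big on the irreducible variety $Z$ and therefore $(Y|_Z)^{2n-2}>0$. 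This yields $q(D,Y)>0$, contradicting the coisotropy of $Z$ and proving the proposition.
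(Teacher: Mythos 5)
Your proof is correct, but at the decisive step it takes a genuinely different route from the paper's. Both arguments agree up to the point where Theorem \ref{lht} and the projection formula give $[Z]=[Y]\cup[D]$ for a general member $Z$ of the covering family, and coisotropy gives $q(D,Y)=0$. The paper then concedes that $D\cdot Y^{2n-1}$ might vanish for a merely nef $Y$ (this is exactly why the ample argument of Corollary \ref{cornull} breaks) and instead extracts a contradiction from the degree-$(2n-2)$ coefficient of the Fujiki polynomial: since $q(D,Y)=0$ and $q(Y,Y)>0$, the signature $(1,h^{1,1}-1)$ of the BBF form on $\h^{1,1}$ forces $q(D,D)<0$, hence $D^2\cdot Y^{2n-2}<0$; on the other hand two distinct members $Z_1,Z_2$ of the family meet in an effective cycle, so $D^2\cdot Y^{2n-2}=Z_1\cdot Z_2\cdot Y^{2n-4}\geq 0$. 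You instead rescue the degree-$(2n-1)$ coefficient: you show $D\cdot Y^{2n-1}=\bigl(Y|_Z\bigr)^{2n-2}$ is strictly positive for a general member because $Y|_Y$ is nef and big on $Y$ and a general member of a covering family is not contained in $\mathrm{Null}(Y|_Y)=\mathbb{B}_+(Y|_Y)$. Your version needs only one member of the family and no Hodge-index/signature input, at the price of invoking Nakamaye's theorem (which could be replaced by the more elementary Kodaira decomposition $mY=A+E$ with $A$ ample and $E$ effective: a general member avoids $\mathrm{Supp}(E)$, giving $\bigl(Y|_Z\bigr)^{2n-2}\geq m^{-(2n-2)}A^{2n-2}\cdot Z>0$); the paper's version needs two members and, implicitly, that they are homologous, which holds for the fibers produced by Proposition \ref{fibr}. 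Two small points to make explicit: choose $Z$ general so that it is simultaneously irreducible, coisotropic, and not contained in the null locus (all generic conditions on the covering family), and note that Lemma \ref{null} gives the sign identification only after dividing by $q(Y,Y)^{n-1}>0$, as you do.
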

The problem is that even though the LHT holds, it is not enough to apply the arguments of section \ref{secample} to a nef and big hypersurface $Y$. Indeed when $Y$ is not ample one may have $Y^{2n-1}\cdot D=0$ (cf. proof of Corollary \ref{cornull}). 
But using that the family of coisotropic subvarieties covers $Y$ we can show that $D^2 \cdot Y^{2n-2} \geq 0$. Indeed, let $Z_1,Z_2$ be two distinct members of this family. Then $Z_1 \cap Z_2$ is an effective cycle of codimension $4$ in $X$. Thus, the intersection number $Z_1\cdot Z_2\cdot Y^{2n-4}=D^2 \cdot Y^{2n-2}$ is not negative. This observation contradicts to the following. 
\begin{lem}
In the assumption of lemma \ref{null} if $q(\beta,\beta)>0$, then $\alpha^{2}\cup\beta^{2n-2}<0$. 
\end{lem}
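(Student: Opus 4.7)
The strategy is to refine the polynomial-identity method used in the proof of Lemma \ref{null}: extract the coefficient of $k^{2n-2}$ (rather than $k^{2n-1}$) from the Fujiki identity, and then invoke the Hodge index theorem for the Beauville--Bogomolov--Fujiki form on $\ns(X)\otimes\R$.

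Concretely, I would start from the polynomial identity (\ref{polynom}) in $k$,
\[
c(k\beta+\alpha)^{2n}=\bigl(k^{2}q(\beta)+2kq(\alpha,\beta)+q(\alpha)\bigr)^{n},
\]
and compare coefficients of $k^{2n-2}$ on both sides. The left-hand side contributes $c\binom{2n}{2}\,\alpha^{2}\beta^{2n-2}=cn(2n-1)\,\alpha^{2}\beta^{2n-2}$. On the right, a trinomial expansion shows that only two kinds of monomials have weight $2n-2$ in $k$: those picking $n-1$ factors $k^{2}q(\beta)$ together with a single factor $q(\alpha)$, and those picking $n-2$ factors $k^{2}q(\beta)$ together with two factors $2kq(\alpha,\beta)$. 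Carrying out the multinomial bookkeeping yields an identity of the shape
\[
c(2n-1)\,\alpha^{2}\beta^{2n-2}=q(\beta)^{n-1}q(\alpha)+2(n-1)\,q(\beta)^{n-2}q(\alpha,\beta)^{2}.
\]

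In the intended application the class $\alpha=[D]$ is orthogonal to $\beta=[Y]$: the coisotropy condition on $[Z]=\alpha\cup\beta$, translated via the lemma preceding Corollary \ref{cornull}, is precisely $q(\alpha,\beta)=0$. Under that hypothesis the identity collapses to $c(2n-1)\,\alpha^{2}\beta^{2n-2}=q(\beta)^{n-1}q(\alpha)$. Because the BBF form has signature $(1,\rho-1)$ on $\ns(X)\otimes\R$ (the Hodge index theorem for IHS manifolds), the positivity $q(\beta)>0$ forces $q$ to be negative definite on the hyperplane $\beta^{\perp}$; hence any nonzero $\alpha\in\beta^{\perp}$ satisfies $q(\alpha)<0$. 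In the proof of the proposition one has $\alpha\ne 0$, since $Z=Y\cap D$ is a genuine codimension-two hypersurface. Combining this with $c>0$ and $q(\beta)^{n-1}>0$ gives $\alpha^{2}\beta^{2n-2}<0$, contradicting the non-negativity $D^{2}\cdot Y^{2n-2}\ge 0$ coming from the covering family of coisotropic subvarieties.

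The point worth emphasizing is that the inequality fails without the implicit input $q(\alpha,\beta)=0$ — take for instance $\alpha=\beta$, for which $\alpha^{2}\beta^{2n-2}=\beta^{2n}>0$ by the Fujiki relation. Thus the genuine content of the lemma is the identity for $\alpha^{2}\beta^{2n-2}$ extracted from the Fujiki polynomial together with the Hodge index theorem; the orthogonality $q(\alpha,\beta)=0$ must be supplied by the geometric context in which the lemma is invoked. The only potential obstacle is keeping the multinomial constants straight, but there is no deeper difficulty.
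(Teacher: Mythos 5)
Your proof is correct and follows essentially the same route as the paper: extract the coefficient of $k^{2n-2}$ from the Fujiki identity and combine it with the signature $(1,h^{1,1}(X)-1)$ of the BBF form on $\h^{1,1}(X)$. In fact your version is the more careful one: the paper's displayed identity silently drops the cross term $2n(n-1)\,q(\beta)^{n-2}q(\alpha,\beta)^{2}$ and never states the hypothesis $q(\alpha,\beta)=0$ (without which, as you observe via $\alpha=\beta$, the statement is false, and without which $q(\alpha,\alpha)<0$ does not follow from the signature alone); that orthogonality is indeed supplied by the coisotropy of $Z$ in the surrounding argument, exactly as you say.
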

\begin{proof}
The signature of the restriction of the BBF form  to $\h^{1,1}(X)$ is $(1,h^{1,1}(X)-1)$. Hence, the BBF square of $\alpha$ is negative. Considering the equality of polynomials (\ref{polynom}) at the terms of degree $2n-2$ we obtain:
$$c \cdot\frac{2n(2n-1)}{2}\cdot\alpha^2\cup\beta^{2n-2}=\frac{n(n-1)}{2}q(\alpha,\alpha)q(\beta,\beta)^{n-1}.$$
Since $q(\alpha,\alpha)<0$ and $q(\beta,\beta)>0$, the intersection number $\alpha^2\beta^{2n-2}$ is negative. 
\end{proof}
Thus finishing the proof
\section{The case of a non numerically effective hypersurface }\label{nenef}
In this section we prove that a generic leaf of the characteristic foliation of $Y$ is also dense in $Y$ if $Y$ is not nef, under some additional condition on $Y$. Moreover, using some remarks by Jorge Vitorio Pereira, we show that a non-nef divisor with positive square must be singular. Thus finish the prove of the third case of conjecture \ref{conja}.\\
 A divisor with positive the Beauville-Bogomolov-Fujiki square is not necessarily nef. But it can be transformed to a nef divisor by a birational modification of $X$. The following proposition is a consequence of theorem 1.2 and  lemma 2.2 in \cite{MatZh}.
\begin{prop}\label{mz}\cite{MatZh}
 Let $X$ be an irreducible holomorphic symplectic manifold and $Y$ an irreducible hypersurface with $q(Y,Y)>0$. Then there is an irreducible holomorphic symplectic manifold $X'$ with birational map $\psi: X' \dasharrow X$, such that the divisor $Y'=\psi^*Y$ is nef.
\end{prop}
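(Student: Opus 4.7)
The plan is to deduce the proposition directly from the two cited results of Matsushita--Zhang, which together describe the birational chamber structure of an IHS manifold in terms of the BBF form.

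First, I would observe that since $Y$ is effective and $q(Y,Y)>0$, the class $[Y]$ lies in the positive cone of $X$ and, being big, should in fact sit in the interior of the closure of the movable cone $\overline{\mathrm{Mov}}(X)$. The content of Lemma~2.2 of \cite{MatZh} is what makes this precise: it provides a BBF/Hodge-index-type estimate that prevents an irreducible hypersurface of positive BBF square from admitting a non-trivial ``negative'' fixed component in its numerical class. In particular $[Y]$ cannot lie on a wall of the movable cone coming from a divisorial contraction.

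Next, I would apply Theorem~1.2 of \cite{MatZh}, the Morrison--Kawamata type statement for IHS manifolds, which asserts that $\overline{\mathrm{Mov}}(X)$ decomposes as a union
\[
\overline{\mathrm{Mov}}(X) \;=\; \bigcup_{\psi:X'\dashrightarrow X}\psi^{*}\overline{\mathrm{Nef}}(X'),
\]
of nef cones pulled back from birational IHS models $X'$ via small birational maps $\psi$. Picking a chamber containing $[Y]$ produces the required model $X'$ and birational map $\psi$. Since $\psi$ is an isomorphism in codimension one between smooth varieties, $\psi^{*}Y$ is a genuine Cartier divisor -- concretely, the proper transform of $Y$ -- and by construction its class lies in $\overline{\mathrm{Nef}}(X')$, so $\psi^{*}Y$ is nef on $X'$.

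The main obstacle is the first step: ensuring that $[Y]$ lies in the interior of $\overline{\mathrm{Mov}}(X)$, not merely in its closure. If $[Y]$ were on a wall, the chamber decomposition would only place it on the boundary of several nef cones simultaneously, and one would have to argue more carefully which birational model to choose. Lemma~2.2 of \cite{MatZh}, which rules out decompositions $[Y]=M+E$ with $E$ a non-trivial effective negative-square divisor, is exactly what handles this subtlety and lets the chamber decomposition be applied to $[Y]$ directly.
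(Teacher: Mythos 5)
Your proposal is sound in outline, but it takes a genuinely different route from the source the paper relies on. The paper gives no argument of its own here: it simply records the proposition as a consequence of Theorem~1.2 and Lemma~2.2 of \cite{MatZh}, whose actual proof runs a $D$-MMP (a sequence of flops; the point of their Lemma~2.2 being that no divisorial contraction can occur for a prime divisor of positive BBF square, since such a divisor is not exceptional in Boucksom's sense) and terminates at a birational IHS model on which the transform of $Y$ is nef. You instead invoke the Morrison--Kawamata-type chamber decomposition of the movable cone into pulled-back nef cones of birational models. That decomposition does yield the statement for projective IHS manifolds, but it is a theorem of Markman built on the global Torelli theorem, not something contained in \cite{MatZh}; so as a reconstruction of ``what Theorem~1.2 and Lemma~2.2 say'' it misattributes the input, even though as a free-standing proof strategy it is legitimate. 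The MMP route is the more economical one here because it needs only that $[Y]$ is not exceptional, whereas the cone-decomposition route needs the full wall-and-chamber structure.

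One step in your write-up is off as stated: bigness of $[Y]$ does not place it in the \emph{interior} of $\overline{\mathrm{Mov}}(X)$ --- a big class can perfectly well lie on a wall of the movable cone (e.g.\ one cutting out a divisorial contraction). What you actually need, and what the ``prime divisor with $q(Y,Y)>0$ is not exceptional, hence modified nef'' argument gives you, is only that $[Y]$ lies in $\overline{\mathrm{Mov}}(X)$ intersected with the positive cone. That already suffices: by local finiteness of the walls inside the positive cone, such a class lies in $\psi^{*}\overline{\mathrm{Nef}}(X')$ for some birational model $\psi:X'\dashrightarrow X$, whether or not it sits on a wall, and since $\psi$ is an isomorphism in codimension one the strict transform $\psi^{*}Y$ is a nef divisor on $X'$. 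So the conclusion stands; just replace the claim about the interior with membership in the closed movable cone.
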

Let us recall an important fact on the birational transformation of IHS manifolds.
\begin{lem}\cite[page~420]{Ka}
Let $\psi:X\dashrightarrow X'$ be a birational transformation of IHS manifolds, then $\psi$ is an isomorphism in codimension $1$. 
\end{lem}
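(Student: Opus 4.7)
The plan is to prove by contradiction that $\psi$ contracts no prime divisor of $X$; by symmetry the same will hold for $\psi^{-1}$, and since a rational map from a smooth variety is automatically defined outside codimension two, this will yield the claim. So suppose some prime divisor $D \subset X$ is contracted by $\psi$ to a subvariety of codimension at least two in $X'$. I would take a smooth projective common resolution $Z$ of the graph of $\psi$, with proper birational morphisms $p \colon Z \to X$ and $q \colon Z \to X'$, and let $\tilde D \subset Z$ denote the strict transform of $D$ under $p$.

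The key input is the existence of the symplectic forms $\sigma$ on $X$ and $\sigma'$ on $X'$. Since the space of holomorphic two-forms is a birational invariant among smooth projective varieties and is one-dimensional on both $X$ and $X'$, the pullbacks $p^*\sigma$ and $q^*\sigma'$ extend to global sections of $\Omega^2_Z$ that span the same one-dimensional subspace of $H^0(Z,\Omega^2_Z)$. Hence $p^*\sigma = \lambda \cdot q^*\sigma'$ for some $\lambda \in \mathbb{C}^*$, and taking $n$-th exterior powers gives $(p^*\sigma)^n = \lambda^n (q^*\sigma')^n$ as sections of $K_Z = \Omega^{2n}_Z$; in particular these two sections have the same zero divisor on $Z$.

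I would then identify these divisors. Because $\sigma^n$ is a nowhere-vanishing trivialization of $K_X$, the section $(p^*\sigma)^n$ of $K_Z$ has zero divisor equal to the relative canonical divisor of $p$, namely $K_Z - p^*K_X = \sum_i a_i E_i$, where the $E_i$ are the $p$-exceptional prime divisors with (positive) discrepancies $a_i$; the analogous formula holds for $(q^*\sigma')^n$ in terms of the $q$-exceptional divisors. Since $p(\tilde D) = D$ has codimension one, $\tilde D$ is not $p$-exceptional and does not appear in the zero divisor of $(p^*\sigma)^n$; on the other hand $q(\tilde D) = \psi(D)$ has codimension at least two, so $\tilde D$ is $q$-exceptional and does appear in the zero divisor of $(q^*\sigma')^n$. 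This contradicts the equality of the two divisors.

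The main subtlety is checking that every $p$-exceptional prime divisor really appears with strictly positive multiplicity in the zero divisor of $(p^*\sigma)^n$, i.e. has positive discrepancy; this is the standard positivity of discrepancies for a proper birational morphism between smooth varieties, verifiable by factoring through blowups of smooth centers. Once this is in place, the argument reduces to a direct comparison of two effective divisors on $Z$, and the proportionality $p^*\sigma = \lambda \cdot q^*\sigma'$ forced by the one-dimensionality of $H^0(\Omega^2)$ is precisely what makes the IHS hypothesis essential.
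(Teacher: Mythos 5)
Your argument is correct, and it is essentially the standard proof of this fact for holomorphic symplectic (more generally, $K$-trivial) manifolds. The paper itself offers no proof and simply cites Kawamata, where the statement is obtained for birational minimal models by comparing discrepancies on a common resolution via the negativity lemma; your version replaces that general machinery with the observation that $h^{0}(\Omega^{2})=1$ forces $p^{*}\sigma=\lambda\, q^{*}\sigma'$, so that the zero divisors of $(p^{*}\sigma)^{n}$ and $(q^{*}\sigma')^{n}$, i.e.\ the two relative canonical divisors, coincide and hence $\mathrm{Exc}(p)=\mathrm{Exc}(q)$. That is a clean, self-contained route that genuinely uses the IHS hypothesis. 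Two points you compress: the positivity of discrepancies over a smooth base (smooth varieties are terminal) is indeed standard, and the final passage from ``no divisor is contracted in either direction'' to ``isomorphism in codimension one'' deserves one more line (restrict $p$ and $q$ to $Z$ minus the common exceptional locus and its images, and use Zariski's main theorem), but neither is a gap in substance.
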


It is easy to see that if a generic leaf of the characteristic foliation is dense in $Y'$, then the same is true for $Y$. Theorem \ref{tnef} (Campana's conjecture for a nef and big hypersurface) has the following corollary.
\begin{cor}
In the assumptions of proposition \ref{mz}, if $Y'$ is smooth, then a generic leaf of the characteristic foliation is dense in $Y$. 
\end{cor}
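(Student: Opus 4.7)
The plan is to apply Theorem \ref{tnef} to the smooth divisor $Y'\subset X'$ and then transfer the density conclusion back to $Y$ using that $\psi$ is an isomorphism in codimension one. First I verify the hypotheses of Theorem \ref{tnef} for $Y'$: by assumption $Y'$ is smooth and nef, and since birational IHS manifolds have isometric second cohomology with respect to the BBF form, we have $q(Y',Y')=q(Y,Y)>0$. By Proposition \ref{thuy} the manifold $X'$ is projective, and the Fujiki relation gives $(Y')^{2n}=c\cdot q(Y',Y')^n>0$. A nef divisor with positive top self-intersection is big, so Theorem \ref{tnef} yields that a generic leaf of the characteristic foliation $F'$ on $Y'$ is Zariski dense in $Y'$.

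Next, since $\psi\colon X'\dashrightarrow X$ is an isomorphism in codimension one, I fix open sets $U'\subset X'$ and $U\subset X$ with complements of codimension at least two, such that $\psi|_{U'}\colon U'\to U$ is a biholomorphism and, after rescaling, $(\psi|_{U'})^*\sigma=\sigma'|_{U'}$. The open dense subsets $V':=Y'\cap U'\subset Y'$ and $V:=Y\cap U\subset Y$ correspond under $\psi$, along with their tangent bundles and the restricted symplectic forms. Consequently the characteristic foliation $F|_V$ is identified with $F'|_{V'}$, and their leaves correspond.

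To conclude, I choose $p'\in V'$ very general so that the leaf $L'$ of $F'$ through $p'$ is Zariski dense in $Y'$; this is possible because $V'$ is dense open in $Y'$ and the dense-leaf locus is the complement of a countable union of proper subvarieties in $Y'$. Setting $p=\psi(p')\in V$, the leaf $L$ of $F$ through $p$ satisfies $\psi(L'\cap U')=L\cap U$. If $L$ were contained in a proper closed subvariety $W\subsetneq Y$, then $W\cap V$ would be a proper closed subvariety of $V$ (otherwise $W$ would contain $\overline{V}=Y$), whose preimage under $\psi|_{V'}$ would be a proper closed subvariety of $V'$ containing $L'\cap V'$. Its Zariski closure in $Y'$ would then be a proper closed subvariety of $Y'$ containing the irreducible leaf $L'$, contradicting density.

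The main subtlety is precisely this transport across a birational, not biregular, map; it succeeds because $\psi$ is an isomorphism off a codimension two locus, so the dense open sets $V$ and $V'$ detect Zariski density on the respective irreducible hypersurfaces. No additional geometric input is needed beyond Theorem \ref{tnef}.
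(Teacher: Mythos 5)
Your proposal is correct and follows the same route the paper takes (the paper merely asserts the transfer is ``easy to see''): apply Theorem \ref{tnef} to the nef divisor $Y'$, with bigness coming from $q(Y',Y')=q(Y,Y)>0$ via the Fujiki relation, and then carry density back through $\psi$, which is an isomorphism in codimension one. Your written-out verification of the transfer step is a faithful elaboration of what the paper leaves implicit, with only the harmless imprecision that $\psi(L'\cap U')$ need only be contained in $L\cap U$ rather than equal to it, which does not affect the density conclusion.
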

In a recent conversation Jorge Vitorio Pereira proposed the following solution of the non-nef case. This result is also very interesting out of the context of conjecture \ref{conja}.\\

\noindent 
\begin{thm}Let $X$ be a holomorphic symplectic manifold and $Y$ a smooth non-nef hypersurface in $X$. Then $Y$ is uniruled.
\end{thm}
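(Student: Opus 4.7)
The strategy is to combine the explicit identification of the characteristic foliation with the positivity criterion of Bogomolov--McQuillan (Theorem \ref{BogMc}). Non-nefness of $Y$ will produce a curve on $Y$ along which the foliation has positive degree, and the Bogomolov--McQuillan theorem will then force all leaves to be rational.

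First I would exploit the non-nef hypothesis: by definition, there exists an irreducible curve $C\subset X$ with $Y\cdot C<0$. Any such $C$ must lie inside $Y$, because if $C$ met $Y$ properly then $C\cap Y$ would be a zero-dimensional effective scheme, contributing a non-negative intersection number. So we may assume $C\subset Y$.

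Next I would invoke the isomorphism $F\cong \mathcal{O}_Y(-Y)$ recalled in the introduction, which comes from identifying $T_X\cong \Omega_X$ via the symplectic form and then taking orthogonal complements in the normal bundle sequence. Restricting to $C$ gives
\[
\deg(F|_C)=\mathcal{O}_X(-Y)\cdot C=-Y\cdot C>0.
\]
Since $Y$ is smooth, $F$ is a genuine line subbundle of $T_Y$ on all of $Y$, so $F$ is a regular (singularity-free) rank-one foliation on a smooth projective variety.

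All hypotheses of Theorem \ref{BogMc} are now satisfied, applied to the smooth variety $Y$ with the foliation $F$ and the curve $C$. The conclusion is that every leaf of $F$ is a rational curve. Since these leaves sweep out $Y$, we obtain that $Y$ is covered by rational curves, i.e.\ uniruled. I do not expect any real obstacle: the two non-trivial ingredients (the identification $F\cong \mathcal{O}_Y(-Y)$ and Theorem \ref{BogMc}) are already recorded in the paper, and the only thing to check by hand is the elementary observation that a curve with negative intersection against $Y$ is forced to lie in $Y$.
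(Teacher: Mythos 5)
Your proposal is correct and follows essentially the same route as the paper: extract a curve $C\subset Y$ with $Y\cdot C<0$ from non-nefness, use $F\cong\ob_Y(-Y)$ to get $\deg F|_C>0$, and apply Theorem \ref{BogMc} to conclude all leaves are rational. No meaningful differences.
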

\begin{proof}
First, we notice that the line bundle $\ob_Y(Y)$ is not nef. Indeed, $\ob_X(Y)$ is not nef. Hence, there exists a curve $C\subset X$, such that the line bundle $\ob_C(Y)$ has negative degree. Since $Y$ intersects $C$ negatively, $C$ is contained in $Y$.\\

 Recall that the characteristic foliation $F$ is isomorphic to $\ob_Y(-Y)$. Thus the restriction of $F$ to the curve $C$ has positive degree. That allows us apply theorem \ref{BogMc} to the variety $Y$ and the foliation $F$. We obtain that all leaves of the characteristic foliation on $Y$ are rational curves. Hence, $Y$ is uniruled. 


\end{proof}
\noindent 
\textbf{Theorem \ref{per}} If $Y$ is smooth and uniruled then $q(Y,Y)<0$.
\begin{proof}
The proof of this statement is implicitly contained in the work \cite{Bo}. Let us summarize it. By \cite[theorem~4.5]{Bo} an "exceptional" hypersurface has negative BBF square. We need to show that a smooth uniruled divisor is "exceptional" according to definition of \cite{Bo}. S. Boucksom defines a "modified nef divisor" (the exact definitions are not important for us). For an irreducible divisor being exceptional is equivalent to being not "modified nef" \cite[Definition~3.10]{Bo}. What is important for us is that the restriction of a modified nef divisor to any prime divisor is pseff \cite[Proposition~2.4]{Bo}. \\

Note that $\ob_Y(Y)$ is not pseff. Indeed, $\omega_Y\cong \ob_Y(Y)$. Canonical bundle of a uniruled variety is not pseff. Hence, $Y$ is exceptional and we can apply \cite[theorem~4.5]{Bo} to a smooth uniruled hypersurface $Y$. 
\end{proof}
\begin{cor}\label{corcor}A non-nef hypersurface $Y$ with non-negative Beauville-Bogomolov square is singular.
\end{cor}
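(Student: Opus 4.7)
The plan is to prove the corollary by contradiction, simply chaining together the two immediately preceding theorems. Assume toward contradiction that $Y$ is smooth; we already know $Y$ is non-nef, and we wish to derive $q(Y,Y) < 0$, contradicting the non-negativity hypothesis.

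First I would invoke the theorem just above (the one obtained from $F \cong \mathcal{O}_Y(-Y)$ together with Theorem \ref{BogMc}): since $Y$ is a smooth non-nef hypersurface in the IHS manifold $X$, it is uniruled. Concretely, non-nefness of $\mathcal{O}_X(Y)$ produces a curve $C \subset X$ on which $\mathcal{O}_X(Y)$ has negative degree, and negativity of the intersection forces $C \subset Y$, so $\mathcal{O}_Y(Y)|_C$ is also negative, making $F|_C$ positive; Theorem \ref{BogMc} then covers $Y$ by rational curves (the leaves of $F$).

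Next I would apply Theorem \ref{per} to this smooth uniruled hypersurface $Y$, which yields $q(Y,Y) < 0$. This contradicts the hypothesis $q(Y,Y) \geq 0$, so the assumption that $Y$ is smooth must fail, and $Y$ is singular.

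There is no genuine obstacle here: the corollary is a formal contrapositive combination of the two results stated immediately before it. The only point requiring care is to confirm that the hypotheses of the intermediate theorem really are met — namely smoothness of $Y$ and non-nefness of $\mathcal{O}_X(Y)$, both of which are direct from the corollary's setup — and that Theorem \ref{per} genuinely needs only smoothness and uniruledness (rather than, say, projectivity of $Y$ beyond what is inherited from $X$), which it does as stated.
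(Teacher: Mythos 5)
Your proposal is correct and is exactly the argument the paper intends (the corollary is stated without an explicit proof precisely because it is the immediate contrapositive combination of the two preceding theorems). Your check of the hypotheses of the intermediate results is appropriate and nothing further is needed.
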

That completes the proof of the third case of conjecture \ref{conja}. 

\bibliography{cf}
\end{document}